\DeclareMathOperator{\cl}{cl}
\DeclareMathOperator{\pt}{pt}
\DeclareMathOperator{\idl}{Idl}
\DeclareMathOperator{\dB}{dB}
\DeclareMathOperator{\Ba}{B}
\DeclareMathOperator{\dSpec}{dSpec}
\DeclareMathOperator{\Spec}{Spec}
\DeclareMathOperator{\dClop}{dClop}
\DeclareMathOperator{\Clop}{Clop}
\DeclareMathOperator{\dO}{d\mathcal{O}}
\DeclareMathOperator{\dpt}{dpt}
\theoremstyle{plain}
\newtheorem{thm}{Theorem}[section]
\newtheorem{lem}[thm]{Lemma}
\newtheorem{prop}[thm]{Proposition}
\newtheorem{cor}[thm]{Corollary}
\theoremstyle{definition}
\newtheorem{defn}[thm]{Definition}
\newtheorem{exmp}[thm]{Example}
\newtheorem{rem}[thm]{Remark}
\newcommand{\bbB}{\mathbb{B}}
\newcommand{\CT}{\mathcal{T}}
\newcommand{\CO}{\mathcal{O}}
\newcommand{\dbt}{{t\!t}}
\newcommand{\dbf}{{f\!\!f}}
\newcommand{\dLat}{\bf dLat}
\newcommand{\dFrm}{\bf dFrm}
\newcommand{\dBoo}{\bf dBA}
\newcommand{\lra}{\longrightarrow}
\begin{document}
  
\title{d-Boolean algebras and their bitopological representation\thanks{This work is supported by the National Natural Science Foundation of China (No. 12371463).}}
\author{Hang Yang, Dexue Zhang\\ {\normalsize School of Mathematics, Sichuan University, Chengdu, China}\\ {\normalsize Email: yanghangscu@qq.com, dxzhang@scu.edu.cn}}
\date{}
\maketitle

\begin{abstract} We present a Stone  duality for bitopological spaces in analogy to the duality between Stone spaces and Boolean algebras, in the same vein as the duality between d-sober bitopological spaces and spatial d-frames established by Jung and Moshier. Precisely, we introduce the notion of d-Boolean algebras and prove that the category of such algebras is dually equivalent to the category of   compact and zero-dimensional bitopological spaces satisfying the $T_0$ separation axiom. \vskip 2pt 

\noindent  {Keywords}: Bitopological space; d-Boolean algebra;  d-frame; Stone duality     

\noindent  {MSC(2020)}:   54E55, 
18F70, 
06E75 
\end{abstract}

\section{Introduction} 
By topologizing the set of prime ideals (or  the set of prime filters) of a Boolean algebra  Marshall Stone   \cite{Stone36,Stone37a} established a duality  between Boolean algebras and   compact and zero- dimensional Hausdorff spaces (which are now known as Boolean spaces or Stone spaces).  
$$ \bfig
\morphism(0,0)|a|/@{->}@<3pt>/<660,0>[{\bf BA}^{\rm op}`{\bf BoolSp};\Spec ]
\morphism(0,0)|b|/@{<-}@<-3pt>/<660,0>[{\bf BA}^{\rm op}`{\bf BoolSp}; \Clop] \efig$$
This duality has a far-reaching influence (see  \cite[Introduction]{Johnstone}) and has led to the discovery of many dualities in mathematics. One example is the duality 
$$\bfig
\morphism(0,0)|a|/@{->}@<3pt>/<700,0>[{\bf SpFrm}^{\rm op}`{\bf SobTop};\pt]
\morphism(0,0)|b|/@{<-}@<-3pt>/<700,0>[{\bf SpFrm}^{\rm op}`{\bf SobTop};\CO]  
\efig$$
between  spatial frames and sober topological spaces, see e.g. \cite{Johnstone,PP2012}.

The duality between Boolean algebras and Stone spaces is closely related to the duality between spatial frames and sober spaces. 
First, the duality between spatial frames and sober spaces cuts down to a dual equivalence between compact and zero-dimensional frames and Stone spaces. Second, assigning  to each Boolean algebra the frame of its ideals defines an equivalence between  Boolean algebras and  compact and zero-dimensional frames. The latter is a point-free version of the Stone representation of Boolean algebras. The dual equivalence between Boolean algebras and Stone spaces is then the  composite of these two equivalences.  
  
The purpose of this paper is to establish an analogue of this duality   in the realm of bitopological spaces. 

In 2006 Jung and Moshier \cite{JM2006} introduced   d-frames as algebraic duals of bitopological spaces. The relationship between d-frames and bitopological spaces is parallel to that between frames and topological spaces, see e.g.  \cite{Jakl,JM2006,Klinke}. In particular,  Jung and Moshier established a duality between spatial d-frames and d-sober bitopological spaces: 
$$\bfig\morphism(0,0)|a|/@{->}@<3pt>/<800,0>[{\bf SpdFrm}^{\rm op}`{\bf SobBiTop};\dpt] \morphism(0,0)|b|/@{<-}@<-3pt>/<800,0>[{\bf SpdFrm}^{\rm op}`{\bf SobBiTop};\dO] \efig$$
 
The duality we wish to establish is in the same vein as the duality of Jung and Moshier. Precisely, we wish to present a dual equivalence $$\bfig
\morphism(0,0)|a|/@{->}@<3pt>/<750,0>[{\bf dBA}^{\rm op}`{\bf dBoolSp};\dSpec ]
\morphism(0,0)|b|/@{<-}@<-3pt>/<750,0>[{\bf dBA}^{\rm op}`{\bf dBoolSp}; \dClop] \efig$$ between  an analogue of the category of Boolean algebras and an analogue of the category of Stone spaces. For {\bf dBoolSp} we take the category of d-Boolean spaces (see Section \ref{Stone bitopological spaces} for definition), these spaces are called compact totally order disconnected spaces in \cite{JM2006} and   pairwise Stone spaces in \cite{BBGK}. Our main task is to find the category {\bf dBA}, of which the objects are called d-Boolean algebras. In a nutshell, d-Boolean algebras are to bitopological spaces what Boolean algebras  are to topological spaces. The duality between d-Boolean algebras and d-Boolean spaces will be established by making the set of prime d-filters of a d-Boolean algebra into a bitopological space in a way parallel to that in the classical case. 

A d-Boolean algebra is defined to be a d-complemented d-lattice (Definition \ref{defn of d-boolean}). The category of d-Boolean algebras will be shown to be equivalent to the category of distributive lattices (Proposition \ref{DLat and DBL}). So, the duality concerned in this paper is also related to topological representation of  distributive lattices. In 1937, Stone \cite{Stone37b} proved that the category {\bf DisLat} of distributive lattices is dually equivalent to the category {\bf CohSp} of coherent spaces and coherent maps (see \cite[page 66]{Johnstone}). In 1970, Priestley \cite{Priestley70,Priestley72} proved that {\bf DisLat} is dually equivalent to the category {\bf Pries} of Priestley spaces  (also known as ordered Stone spaces) and order-preserving continuous maps (see \cite[page 75]{Johnstone}). In 2006, Jung and Moshier \cite[Corollary 8.13]{JM2006} proved that  {\bf DisLat} is dually equivalent to the category of d-Boolean spaces and continuous maps; also see \cite[Theorem 5.3]{BBGK} for this duality. 
Therefore, the category {\bf CohSp} of coherent spaces, the category {\bf Pries} of Priestley spaces and the category {\bf dBoolSp} of d-Boolean spaces are equivalent to each other, all of them are of a topological nature. Are there natural categories of an algebraic nature that are equivalent to the category of distributive lattices? In 2017, Jakl \cite[Section 2.6]{Jakl} demonstrated that the category of compact and d-zero-dimensional d-frames is an example of such categories. The category of d-Boolean algebras provides another such example. 

The contents are arranged as follows. In Section \ref{review}  we briefly review  the duality between Boolean algebras and Stone spaces. In Section \ref{Stone bitopological spaces} we recall  some basic concepts of bitopological spaces and introduce  the notion of d-Boolean spaces. In Section \ref{dBA-section} d-Boolean algebras are postulated as a special kind of d-lattices introduced in \cite{Klinke,KJM}. Structures of d-lattices and d-Boolean algebras are examined. It is proved that the category of d-Boolean algebras is equivalent to the category of compact and d-zero-dimensional d-frames (Theorem  \ref{dBA vs dfrm}); and that the category of d-lattices is equivalent to the category of  coherent d-frames and coherent d-frame homomorphisms (Corollary \ref{d-lattice vs d-frame}). In Section \ref{d-spectrum} we introduce  the notion of prime d-filters for d-lattices and make  the set of prime d-filters of a d-lattice  into a bitopological space, called the spectrum of the d-lattice. We prove  that the spectrum of each d-Boolean algebra is a d-Boolean space and every d-Boolean space is the spectrum of a unique (up to isomorphism) d-Boolean algebra, arriving at the desired duality.  
  
\section{Review of the Stone representation of Boolean algebras}  \label{review}
In this section, we briefly review the duality between Boolean algebras and Stone spaces established by Stone \cite{Stone36,Stone37a}, for comprehensive accounts of the story we refer to the monograph of Johnstone   \cite{Johnstone}, and the monograph of Gehrke and van Gool \cite{GG2024}. Our reference for category theory is \cite{AHS}, for domain theory is \cite{Gierz2003}, and for general topology is \cite{Engelking}.

Throughout this paper, by a distributive lattice we always mean a bounded one; that means, a distributive lattice has top element $1$ and a bottom element $0$.  
An element $a$ of a distributive lattice $(L,\sqsubseteq)$ is \emph{complemented} if there is some $b\in L$ such that $a\sqcup b=1$ and $a\sqcap b=0$. Such $b$,  when exists, is necessarily unique and is called the \emph{complement} of $a$. A Boolean algebra is a distributive lattice of which all elements are complemented.  

The clopen (closed and open) sets of a topological space form a Boolean algebra. Sending each topological space to this algebra gives rise to a contravariant functor  $$\Clop\colon{\bf Top}\longrightarrow{\bf BA}^{\rm op} $$  from the category of topological spaces to the category of Boolean algebras.  The celebrated Stone representation of Boolean algebras says that restricting the domain of the functor $\Clop$ to the subcategory of Boolean spaces yields an equivalence of categories.  

A \emph{Boolean space}, also known as a \emph{Stone space}, is a compact topological space $X$ that satisfies one, hence all, of the following equivalent conditions  \cite[page 70]{Johnstone}:
\begin{enumerate}[label=\rm(\roman*)] \setlength{\itemsep}{0pt} 
\item $X$ is $T_0$ and zero-dimensional in the sense that clopen  sets of $X$ form a base for the topology.
\item $X$ is totally separated in the sense that whenever $x$ and $y$ are distinct points of $X$, there is a clopen  set of $X$ containing $x$ but not $y$.
\item $X$ is Hausdorff and totally disconnected in the sense that  connected subsets of $X$ are single points.
\end{enumerate} The full subcategory of {\bf Top} composed of Boolean spaces is denoted by {\bf BoolSp}. 

An \emph{ideal} of a distributive lattice $L$ is a non-empty subset $I$ of $L$ such that $a\in I$ and $b\sqsubseteq a$ implies $b\in I$, and that $a,b\in I$ implies $a\sqcup b\in I$. The set of all ideals of $L$ is denoted by $\idl L$.  For an ideal $I$ of $L$, we say that \begin{itemize} \setlength{\itemsep}{0pt} \item $I$ is \emph{proper} if $1\notin I$. \item  $I$ is \emph{prime} if it is proper and  $a\sqcap b\in I\implies a\in I$ or $ b\in I$. \end{itemize}

The notions \emph{filter}, \emph{proper filter} and \emph{prime filter} are defined dually. In any distributive lattice, the complement of a prime ideal is a prime filter and vice versa.

For each distributive lattice $L$, let $\Spec L$ be the set of all prime filters of $L$. The collection  of subsets $$\Phi(I)=\{F\in\Spec L: I\cap F\not=\emptyset\},\quad I\in\idl L $$ is a topology on $\Spec L$, the resulting topological space is called the  \emph{spectrum}  of $L$. The spectrum of each Boolean algebra is a Boolean space, so we have a  contravariant functor  $$\Spec\colon{\bf BA}^{\rm op}\longrightarrow{\bf BoolSp}.$$ The functors $\Clop\colon{\bf BoolSp}\longrightarrow{\bf BA}^{\rm op}$ and $\Spec\colon{\bf BA}^{\rm op}\longrightarrow{\bf BoolSp}$ witness that the category  of Boolean algebras is dually equivalent to the category of Boolean spaces.   

\section{d-Boolean spaces} \label{Stone bitopological spaces}
A \emph{bitopological space} \cite{Kelly} is a triple $(X,\tau_+,\tau_-)$, where $X$ is a set, $\tau_+$ and $\tau_-$ are topologies on $X$. A  \emph{continuous}  (also called bicontinuous in the literature) map
$$f\colon (X,\tau_+,\tau_-) \longrightarrow (X',\tau_+',\tau_-')$$ is a map $f\colon X\longrightarrow X'$ such that both $f\colon (X,\tau_+)\longrightarrow (X',\tau_+')$ and $f\colon (X,\tau_-)\longrightarrow (X',\tau_-')$ are continuous. The category of bitopological spaces and continuous maps is denoted by  $${\bf BiTop}.$$ 

The assignment $(X,\CT)\mapsto (X,\CT,\CT)$ defines a full and faithful functor $$\omega\colon {\bf Top}\lra{\bf BiTop}.$$ The functor $\omega$ embeds the category of topological spaces as a simultaneously reflective and coreflective full subcategory of the category of bitopological spaces; the {\bf Top}-coreflection of a bitopological space $(X,\tau_+, \tau_-)$ is given by the topological space $(X,\tau_+\vee\tau_-)$ \cite{HZ2025}.  

\begin{defn}\label{compact bitop}
A bitopological space $(X,\tau_+,\tau_-)$ is
\begin{enumerate}[label=\rm(\roman*)] \setlength{\itemsep}{0pt}
\item (\cite{Mu})  $T_0$ if for each pair of distinct points of $X$, there is a $\tau_+$-open set or a
$\tau_-$-open set containing one of the points, but not the other.
\item (\cite{Swart}) \label{join compactness}  compact if every subset of $\tau_+\cup\tau_-$  covering $X$ has a finite subset that covers $X$.
 
\item \label{zero-dimensionality} (\cite{Reilly}) zero-dimensional (called pairwise zero-dimensional in \cite{Reilly}) if $\tau_{+}$ has a base of $\tau_{-}$-closed sets and $\tau_{-}$ has a base of $\tau_{+}$-closed sets. 
\end{enumerate}
\end{defn}
 
Let $(X,\tau_+,\tau_-)$  be a bitopological space. Then, $(X,\tau_+,\tau_-)$ is $T_0$  if and only if it is join $T_0$ \cite{Lal} in the sense that the topological space $(X,\tau_{+}\vee\tau_{-})$ is $T_0$;  and, under the assumption of the Axiom of Choice,  $(X,\tau_+,\tau_-)$ is compact if and only if it is join compact in the sense that the topological space $(X, \tau_{+}\vee \tau_{-})$ is compact.  

For each topological space $X$, the bitopological space $\omega(X)$ is $T_0$,   compact, and zero-dimensional if and only if so is $X$  (as a topological space),  respectively.
  
For each bitopological space $(X,\tau_+,\tau_-)$, we write $\leq_+$ and $\leq_-$ for the specialization order of the topological spaces $(X,\tau_+)$ and $(X,\tau_-)$, respectively; and let $\leq$ be the intersection of $\leq_+$ and the opposite of $\leq_-$; that is, $$\leq\thinspace = \thinspace\leq_+\cap \geq_-.$$ 

\begin{defn}\label{order separated}  (\cite[Definition 3.7] {JM2008})
A  bitopological space $(X;\tau_{+},\tau_{-})$ is order-separated provided that the binary relation $\leq$ (which is $\leq_+\cap \geq_-$) is a partial order, and  that if $x\not\leq  y$ then there exist a $\tau_+$-neighborhood $U$ of $x$ and a $\tau_-$-neighborhood $V$ of $y$ such that  $U\cap V=\emptyset$.   
\end{defn}

For each topological space $X$, the bitopological space $\omega(X)$ is order-separated if and only if  $X$ is Hausdorff. Lemma 3.8 of Jung and Moshier \cite{JM2008} says that in an order-separated bitopological space $(X,\tau_+,\tau_-)$, the specialization order $\leq_+$ of $(X,\tau_+)$ is opposite to the specialization order $\leq_-$ of  $(X,\tau_-)$,   hence $\leq\thinspace=\thinspace\leq_+= \thinspace \geq_-$.
 
\begin{defn}
A  bitopological space $(X,\tau_+,\tau_-)$ is totally order-separated provided that the binary relation $\leq$  is a partial order, and that if $x\not\leq y$ then there is a $\tau_+$-open  and $\tau_-$-closed set  containing $x$ but not $y$. 
\end{defn}

A totally order-separated bitopological space is clearly order-separated. So, a totally order-separated bitopological space is exactly a totally order-disconnected space in the sense of \cite[Definition 8.12]{JM2006}.  The term \emph{totally order-separated} is chosen simply because for each topological space $X$, the bitopological space $\omega(X)$ is totally order-separated if and only if  $X$ is totally separated in the sense of \cite[page 69]{Johnstone}.

\begin{prop}\label{Stone bitopological space}
For each bitopological space $(X,\tau_+,\tau_-)$, the following  are equivalent:
\begin{enumerate}[label=\rm(\arabic*)] \setlength{\itemsep}{0pt} 
\item $(X,\tau_+,\tau_-)$ is  $T_0$, compact and zero-dimensional. 
\item $(X,\tau_+,\tau_-)$ is compact and totally order-separated.
\end{enumerate}
\end{prop}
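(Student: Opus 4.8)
The plan is to prove both implications by translating the separation axioms into the language of the specialization preorders $\leq_+$, $\leq_-$ and the induced relation $\leq \,=\, \leq_+ \cap \geq_-$. The key fact to record at the outset is that $x\not\leq y$ holds precisely when either there is a $\tau_+$-open set containing $x$ but not $y$ (equivalently $x\not\leq_+ y$) or there is a $\tau_-$-open set containing $y$ but not $x$ (equivalently $y\not\leq_- x$). A second observation is that, since $\leq$ is always a preorder, the $T_0$ axiom is equivalent to antisymmetry of $\leq$: if $x\neq y$ then $T_0$ furnishes a $\tau_+$- or $\tau_-$-open set separating them in one direction or the other, which breaks one of the inequalities $x\leq y$, $y\leq x$; conversely, if $x$ and $y$ lie in exactly the same $\tau_+$-open sets and the same $\tau_-$-open sets, then $x\leq y$ and $y\leq x$, so antisymmetry forces $x=y$.

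For $(1)\Rightarrow(2)$ compactness is simply carried along, and antisymmetry of $\leq$ is exactly the reformulation of $T_0$ just noted, so only the separation property needs work, and for this I would argue directly from zero-dimensionality. Given $x\not\leq y$, I split into the two cases above. If $x\not\leq_+ y$, choose a $\tau_+$-open $W$ with $x\in W$, $y\notin W$ and, using that $\tau_+$ has a base of $\tau_-$-closed sets, shrink it to a $\tau_+$-open $\tau_-$-closed $U$ with $x\in U\subseteq W$; then $U$ separates as required. If instead $y\not\leq_- x$, choose a $\tau_-$-open $W'$ with $y\in W'$, $x\notin W'$ and, using that $\tau_-$ has a base of $\tau_+$-closed sets, shrink it to a $\tau_-$-open $\tau_+$-closed $V$ with $y\in V\subseteq W'$; then the complement $U=X\setminus V$ is $\tau_+$-open and $\tau_-$-closed, contains $x$ and omits $y$. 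Notably this direction uses only $T_0$ and zero-dimensionality.

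The substantive direction is $(2)\Rightarrow(1)$. Here $T_0$ is immediate from antisymmetry of $\leq$ together with the reformulation above: for distinct $x,y$ at least one of $x\not\leq y$, $y\not\leq x$ holds, and total order-separation then yields a $\tau_+$-open (hence separating) witness. The real content is zero-dimensionality, where pointwise separation must be upgraded to a neighbourhood base, and this is where compactness enters. To see that the $\tau_+$-open $\tau_-$-closed sets form a base for $\tau_+$, fix a $\tau_+$-open $W$ and $x\in W$. For every $y$ in the $\tau_+$-closed set $X\setminus W$ we have $x\not\leq y$ (since $W$ witnesses $x\not\leq_+ y$), so total order-separation produces a $\tau_+$-open $\tau_-$-closed $U_y$ with $x\in U_y$, $y\notin U_y$. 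The $\tau_-$-open sets $X\setminus U_y$ cover $X\setminus W$, and together with the $\tau_+$-open set $W$ they cover $X$; compactness yields $y_1,\dots,y_n$ with $X\setminus W\subseteq\bigcup_i (X\setminus U_{y_i})$, whence $U:=\bigcap_i U_{y_i}$ is a $\tau_+$-open $\tau_-$-closed set with $x\in U\subseteq W$. The dual statement, that the $\tau_-$-open $\tau_+$-closed sets form a base for $\tau_-$, is proved symmetrically: for a $\tau_-$-open $W'\ni x$ and each $y\in X\setminus W'$ one gets $y\not\leq x$ (since $W'$ witnesses $x\not\leq_- y$), applies total order-separation to the reversed pair $(y,x)$ to obtain a $\tau_+$-open $\tau_-$-closed $U_y$ with $y\in U_y$, $x\notin U_y$, covers the $\tau_-$-closed set $X\setminus W'$ by the $\tau_+$-open sets $U_y$, and intersects the finitely many complements to produce a $\tau_-$-open $\tau_+$-closed $V$ with $x\in V\subseteq W'$.

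I expect the main obstacle to be precisely this gluing step in $(2)\Rightarrow(1)$: one must pass from the one-point-at-a-time separation provided by total order-separation to a genuine basic neighbourhood, and the mechanism is the compactness of the $\tau_+$-closed (resp.\ $\tau_-$-closed) complement, which I would justify by the standard observation that a family of $\tau_-$-open (resp.\ $\tau_+$-open) sets covering such a closed set, augmented by the complementary $\tau_+$-open (resp.\ $\tau_-$-open) set, is a cover of $X$ by members of $\tau_+\cup\tau_-$ and hence admits a finite subcover. A secondary point requiring care throughout is the asymmetry of the relation $\leq=\leq_+\cap\geq_-$: one must keep track of which coordinate ($\leq_+$ or $\leq_-$) fails, and in the $\tau_-$-side arguments apply total order-separation to the reversed pair $(y,x)$ and pass to complements, since the definition only guarantees $\tau_+$-open $\tau_-$-closed witnesses.
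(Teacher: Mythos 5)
Your proof is correct, and the substantive direction $(2)\Rightarrow(1)$ is essentially the paper's argument: pointwise witnesses from total order-separation, an augmented cover of $X$ by members of $\tau_+\cup\tau_-$, a finite subcover, and a finite intersection of $\tau_+$-open $\tau_-$-closed sets; the paper merely writes ``likewise'' for the $\tau_-$ side that you spell out (correctly handling the reversed pair $(y,x)$ and passing to complements). Where you genuinely diverge is in $(1)\Rightarrow(2)$. The paper proceeds in two steps: it first proves order-separation, resting on the lemma $\leq_+\,=\,\geq_-$ (Lemma 3.3 of the Bezhanishvili--Bezhanishvili--Gabelaia--Kurz paper, reproved there via zero-dimensionality), which collapses $\leq$ to $\leq_+$ and reduces everything to a single case; it then upgrades to total order-separation by shrinking the separating open set once more. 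You instead isolate the purely order-theoretic equivalence ``$T_0$ $\iff$ $\leq$ is antisymmetric'' (needing no zero-dimensionality) and obtain the separating $\tau_+$-open $\tau_-$-closed set by a two-case analysis according to which coordinate of $\leq\,=\,\leq_+\cap\geq_-$ fails. Your route is more elementary and makes the logical dependencies sharper ($T_0$ alone gives the partial order, zero-dimensionality alone gives the witnesses, compactness is not used in this direction, which you note explicitly). What the paper's detour buys is the stronger byproduct $\leq\,=\,\leq_+\,=\,\geq_-$, i.e.\ that both component topologies are $T_0$ (bi-$T_0$); the remark immediately after the proposition relies on exactly this byproduct of the proof to identify Stone bitopological spaces with the pairwise Stone spaces of the cited literature, and your argument does not deliver that fact directly.
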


\begin{proof}
$(1)\Rightarrow(2)$ We proceed with two steps.

{\bf Step 1}.   $(X,\tau_+,\tau_-)$ is order-separated. 

First  we show that $\leq_+  =\thinspace \geq_-$. This equality is the content of Lemma 3.3 in \cite{BBGK}. We include a proof here for convenience of the reader. If $x\not\leq_+y$,  by zero-dimensionality there is a $\tau_+$-open  and $\tau_-$-closed set $U$ containing $x$ but not $y$. Then $X\setminus U$ is a $\tau_-$-open set containing $y$ but not $x$, from which one infers that $y\not\leq_- x$. Likewise, $y\not\leq_-x$ implies  $x\not\leq_+y$. Therefore $\leq_+  =\thinspace  \geq_-$. 

Next we show that $\leq$ is a partial order. For this we show that  $\leq_+$ is a partial order. Let $x,y$ be a pair of distinct points of $X$. Since   $(X,\tau_{+}, \tau_{-})$ is $T_0$, there is some $U\in\tau_+\cup\tau_-$ that contains one of the points, but not the other, so, either $x\not\leq_+ y$, or  $y\not\leq_+ x$, or  $x\not\leq_- y$, or  $y\not\leq_- x$. Since $\leq_+  =\thinspace \geq_-$, then either $x\not\leq_+ y$ or  $y\not\leq_+ x$, hence $\leq_+$ is a partial order.

Now we show that if $x\not\leq y$ then there exist a $\tau_+$-neighborhood $U$ of $x$ and a $\tau_-$-neighborhood $V$ of $y$ such that  $U\cap V=\emptyset$.  Since $\leq\thinspace =\thinspace \leq_+$,  there is a $\tau_+$-open set $W$ containing $x$ but not $y$. By zero-dimensionality there is a $\tau_+$-open  and $\tau_-$-closed set $U$ such that $x\in U\subseteq W$. Then $U$ is a $\tau_+$-neighborhood  of $x$  and $V\coloneqq X\setminus U$ is a $\tau_-$-neighborhood of $y$ such that  $U\cap V=\emptyset$.

{\bf Step 2}.  $(X,\tau_+,\tau_-)$ is totally order-separated. 

It suffices to check that if $x\not\leq y$, then there is a $\tau_+$-open  and $\tau_-$-closed set containing $x$ but not $y$.  Since $(X,\tau_+,\tau_-)$ is order-separated, there is a $\tau_+$
open set $U$ containing $x$ but not $y$. Since $(X,\tau_+,\tau_-)$ is zero-dimensional, there is a $\tau_+$-open  and $\tau_-$-closed set $V$ such that $x\in V\subseteq U$. Thus $V$ is a $\tau_+$-open  and $\tau_-$-closed set containing $x$ but not $y$. 
 
$(2)\Rightarrow(1)$   It suffices to show that $(X,\tau_+,\tau_-)$ is zero-dimensional. Let $x\in X$ and $U$ be a $\tau_+$-open set containing $x$. We find a $\tau_+$-open and  $\tau_-$-closed set $V$ such that $x\in V\subseteq U$. For each $y\in X\setminus U$, we have $x\not\leq y$, so there is a $\tau_+$-open  and $\tau_-$-closed set $U_y$ containing $x$ but not $y$. Choosing one such $U_y$ for each $y\in X\setminus U$ we obtain a $\tau_-$-open cover $\{X\setminus U_y:y\in X\setminus U\}$ of $X\setminus U$. By  compactness of $(X,\tau_+,\tau_-)$, $X\setminus U$ can be covered by finitely many elements of $\{X\setminus U_y:y\in X\setminus U\}$, say, $X\setminus U_1, X\setminus U_2,\cdots, X\setminus U_n$. Let $V=\bigcap_{i\leq n}  U_i$. Then $V$ satisfies the requirement. This shows that $\tau_+$ has a base of $\tau_-$-closed sets.
Likewise, $\tau_-$ has a base of $\tau_+$-closed sets. Therefore  $(X,\tau_+,\tau_-)$ is zero-dimensional.
\end{proof} 

In a $T_0$ and zero-dimensional  bitopological space,  connected sets (in the sense of Pervin \cite{Pervin}) are single points. We do not know whether the requirement being zero-dimensional in Proposition \ref{Stone bitopological space}\thinspace(1) can be weakened to that connected sets are single points.

A bitopological space satisfying the equivalent conditions of Proposition \ref{Stone bitopological space} is called a  \emph{d-Boolean space}, or   a \emph{d-Boolean bitopological space}. The category of d-Boolean spaces and continuous maps is denoted by {\bf dBoolSp}. 
It is clear that a topological space $X$ is a Boolean space if and only if the bitopological space $\omega(X)$ is a d-Boolean space.  

\begin{rem} The argument of Proposition \ref{Stone bitopological space} shows that if $(X,\tau_+,\tau_-)$ is a d-Boolean space, then it is bi-$T_0$ in the sense that both $(X,\tau_+)$ and $(X, \tau_-)$ are $T_0$ topological spaces. So, by Lemma 2.5 of \cite{BBGK} one sees that d-Boolean spaces are precisely  \emph{pairwise Stone spaces} in  \cite[Definition 2.10]{BBGK}. \end{rem}

\section{d-Lattices and d-Boolean algebras} \label{dBA-section}

Suppose   $(L,\sqsubseteq,0,1)$ is a  distributive lattice; $\dbt$ and $\dbf$ are a  pair of complementary  elements, that means, $\dbt\sqcup\dbf=1$ and $\dbt\sqcap\dbf=0$. Let $L_+$ be the lower set $\downarrow\!\dbt$ and let $L_-$ be the lower set $\downarrow\!\dbf$ of $L$. Then the  assignment $$a\mapsto (a\sqcap\dbt,a\sqcap\dbf)$$ is an order isomorphism from $L$ to the product lattice $L_+\times L_-$, the inverse  isomorphism takes each element $(a,b)$ of $L_+\times L_-$ to the join $a\sqcup b$ in $(L,\sqsubseteq)$. 

We use the isomorphism $L\cong L_+\times L_-$ to define a new order $\leq$ on $L$ as follows: $$(a_1,b_1)\leq (a_2,b_2)\iff a_1\sqsubseteq a_2,~b_1\sqsupseteq b_2.$$ Then $(L,\leq)$ is a distributive lattice with $\dbt$ as top element and $\dbf$ as bottom element. The meet $x\wedge y$ and the join $x\vee y$ in $(L,\leq)$ are computed in terms of the meet and join operations of $(L,\sqsubseteq)$ as follows:
$$x\wedge y=(x\sqcap \dbf)\sqcup (y\sqcap \dbf)\sqcup (x\sqcap y);$$
$$x\vee y=(x\sqcap \dbt)\sqcup(y\sqcap \dbt)\sqcup(x\sqcap y).$$ The lattice structure $(L,\wedge,\vee)$ is already known in  \cite[page 751]{BK47}. Following Jung and Moshier \cite{JM2006, JM2008} we call $\sqsubseteq$ and $\leq$ the \emph{information order} and the \emph{logic order} of $(L;\dbt,\dbf)$, respectively.  
If $\{\dbt,\dbf\}=\{1,0\}$, then the logic order $\leq$ coincides with the order $\sqsubseteq$ or its opposite. So, in order to avoid degeneracy, in this paper we always assume that the complementary pair $\{\dbt,\dbf\}$ is different from $\{1,0\}$.
 
Let $\mathbb{B}$ be the Boolean algebra $\{0,1,\dbt,\dbf\}$, with $0$ being the bottom element, $1$ being the top element, $\dbt$ and $\dbf$ being complements of each other, as visualized below: \[\bfig \morphism(0,0)/@{-}/<-250,-250>[1`\dbf;]  \morphism(0,0)/@{-}/<250,-250>[1`\dbt;] \morphism(-250,-250)/@{-}/<250,-250>[\dbf`0;]  \morphism(250,-250)/@{-}/<-250,-250>[\dbt`0;] \efig\] With the logic order $\mathbb{B}$ becomes \[\bfig \morphism(0,0)/@{-}/<-250,-250>[\dbt`1;]  \morphism(0,0)/@{-}/<250,-250>[\dbt`0;] \morphism(-250,-250)/@{-}/<250,-250>[1`\dbf;]  \morphism(250,-250)/@{-}/<-250,-250>[0`\dbf;] \efig\]

\begin{defn} (\cite{Klinke,KJM})
A  d-lattice is a structure $(L;\dbt,\dbf;{\bf con},{\bf tot})$, where  $L$ is a  distributive lattice, $\dbt$ and $\dbf$ are a pair of  complementary elements of $L$,  {\bf con}  and {\bf tot}  are subsets of $L$ (called the  consistency predicate  and the  totality  predicate, respectively), subject to the following conditions:
\begin{itemize} \setlength{\itemsep}{0pt}		
\item  $\dbt,\dbf\in{\bf con}$; 

\item   $\dbt,\dbf\in{\bf tot}$;
			
\item {\bf con} is a lower set with respect to the information order $\sqsubseteq$;  

\item   {\bf tot} is an upper set with respect to the information order $\sqsubseteq$;  

\item   {\bf con} and {\bf tot} are sublattices of $L$ under the logic order $\leq$, i.e., sublattices of  $(L,\leq,\wedge,\vee)$;  
 
\item ({\bf con}--{\bf tot}) $\alpha\in{\bf con}$, $\beta\in{\bf tot}$,  $(\alpha\sqcap\dbt=\beta\sqcap\dbt\text{ or }\alpha\sqcap\dbf= \beta\sqcap\dbf)\implies \alpha\sqsubseteq\beta$.
\end{itemize} 

A d-lattice homomorphism $f\colon\mathcal{L}\longrightarrow\mathcal{L}'$ between d-lattices  
is a lattice homomorphism $f\colon L\longrightarrow L'$ that preserves $\dbt$, $\dbf$, ${\bf con}$ and ${\bf tot}$ in the sense that
$$f(\dbt)=\dbt',\quad f(\dbf)=\dbf',\quad f({\bf con})\subseteq{\bf con}',\quad f({\bf tot})\subseteq{\bf tot}'.$$
The category of d-lattices and d-lattice homomorphisms is denoted by  $\dLat.$  \end{defn}

\begin{exmp} \label{B as d-lattice} \begin{enumerate}[label=\rm(\roman*)] \setlength{\itemsep}{0pt}
\item There is a unique way to make the Boolean algebra $\mathbb{B}=\{0,1,\dbt,\dbf\}$ into a d-lattice; that is, ${\bf con}=\{0,\dbt,\dbf\}$ and ${\bf tot}=\{1,\dbt,\dbf\}$. In this paper we always view $\mathbb{B}$ as a d-lattice (in this way). 
\item Let $L$ be a distributive lattice with $\dbt,\dbf$ being a complementary pair of elements. Then $(L;\dbt,\dbf;{\bf con},{\bf tot})$  is a d-lattice, where   ${\bf con}= \thinspace\downarrow\!\dbt \thinspace \cup\! \downarrow\!\dbf$ and ${\bf tot}= \thinspace\uparrow\!\dbt\thinspace\cup\! \uparrow\!\dbf$. 
\end{enumerate} \end{exmp}

\begin{exmp} \label{order dual} (\cite[Definition 2.1.4]{Klinke}) For each  d-lattice $\mathcal{L}=(L;\dbt,\dbf;{\bf con},{\bf tot})$, the structure $$\mathcal{L}^\partial\coloneqq (L^{\rm op};\dbt^\partial,\dbf^\partial; {\bf con}^\partial, {\bf tot}^\partial)$$ is a d-lattice, where $L^{\rm op}$ is the opposite of $L$, and \begin{itemize} \setlength{\itemsep}{0pt}
 \item $\dbt^\partial=\dbf$, \item $\dbf^\partial=\dbt$, \item  ${\bf con}^\partial={\bf tot}$, \item ${\bf tot}^\partial={\bf con}$. \end{itemize} The d-lattice $\mathcal{L}^\partial$ is  called the \emph{order-dual}  of $\mathcal{L}$. It is clear that $\mathcal{L}^{\partial\partial} =\mathcal{L}$. \end{exmp}
  
\begin{rem}For each d-lattice $\mathcal{L}=(L;\dbt,\dbf;{\bf con},{\bf tot})$, the underlying lattice $L$ is isomorphic to the product lattice $L_+\times L_-$, where $L_+$ and $L_-$ are the principal lower sets $\downarrow\!\dbt$ and  $\downarrow\!\dbf$ of $L$, respectively. So, a d-lattice can be equivalently presented as a pair $L_+,L_-$ of distributive lattices together with two subsets of $L_+\times L_-$ subject to certain requirement, as in \cite[Definition 1]{KJM} and \cite[Definition 2.1.2]{Klinke}. This presentation is very convenient, especially in the construction of d-lattices, see e.g. Example \ref{omegaH} and Example \ref{dO} below. Both presentations are used in this paper. In order to switch between these presentations without causing confusion, in this paper we identify $L$ with the product lattice $L_+\times L_-$,  and in particular, we identify the lower sets $L_+$ and $L_-$ of $L$ with the subsets $\{(a,0):a\sqsubseteq\dbt\}$ and $\{(0,b):b\sqsubseteq\dbf\}$ of the product lattice $L_+\times L_-$, respectively. 
\begin{center}\begin{tikzpicture}[scale=0.6]  
\fill [lightgray] (0,0) rectangle (5,4); 
\draw[thick] (0,0) -- (5,0); 
\draw[thick] (0,0) -- (0,4);  
\node at (2.5,-0.4) {{\small $L_+$}}; \node at (5,-0.4) {{\small $\dbt$}}; 
\node at (-0.5,2) {{\small $L_-$}};  \node at (-0.4,4) {{\small $\dbf$}}; \node at (-0.3,-0.3) {{\small $0$}}; \node at (5.3,4) {{\small $1$}}; 
\draw [fill] (0,4) circle [radius=0.05]; \draw [fill] (5,0) circle [radius=0.05]; \node at (2.5,-1.5) {{\small $L\cong L_+\times L_-$}};
\end{tikzpicture}\end{center} \end{rem}

\begin{exmp}\label{omegaH} Let $(L,\sqsubseteq,\sqcap,\sqcup)$ be a  distributive lattice. Then,  $L\times L$ with the pointwise order (also denoted by $\sqsubseteq$) is a distributive lattice with $(1,0)$ and $(0,1)$ being a complementary  pair.  The structure  $\omega(L)\coloneqq(L\times L;\dbt,\dbf;{\bf con},{\bf tot})$  is a d-lattice, where  \begin{itemize} \setlength{\itemsep}{0pt} 
\item $\dbt=(1,0)$, $\dbf=(0,1)$; \item ${\bf con}=\{(a,b)\in L\times L: a\sqcap b=0\}$;  \item ${\bf tot}=\{(a,b)\in L\times L: a\sqcup b=1\}$. \end{itemize} We only need to check the ({\bf con}--{\bf tot}) condition.  Assume, without loss of generality, that $(a_1,b_1)\in{\bf con}$, $(a_2,b_2)\in{\bf tot}$, and   $(a_1,b_1)\sqcap(1,0)=(a_2,b_2)\sqcap(1,0)$. Since  meets and joins in $(L\times L,\sqsubseteq)$ are computed pointwise, it follows that  $a_1=a_2$ and $$b_1=b_1\sqcap(a_2\sqcup b_2)= (b_1\sqcap a_1)\sqcup (b_1\sqcap b_2)=b_1\sqcap b_2,$$ then $b_1\sqsubseteq b_2$ and $(a_1,b_1)\sqsubseteq(a_2,b_2)$. So, we obtain a full and faithful functor   $$\omega\colon {\bf DisLat}\lra{\bf dLat}$$ from the category {\bf DisLat} of distributive lattices and lattice homomorphisms to the category of d-lattices and d-lattice homomorphisms. The construction of  $\omega(L)$ is a direct generalization of the construction of a d-frame out of a frame in Jung and Moshier \cite[page 47]{JM2006}. 
\end{exmp}
 
Consider the d-lattice $\omega(L)$. Then, $${\bf con}\cap{\bf tot}= \{(a,b)\in L\times L: a\sqcup b=1, a\sqcap b=0\}.$$ In other words, ${\bf con}\cap{\bf tot}$ is the set of all complementary pairs of $L$. Furthermore, if $B$ is the sublattice of $L$ composed of complemented elements, which is readily verified to be a Boolean algebra, then   ${\bf con}\cap{\bf tot}$ is equal to $\{(a,\neg a):a\in B\}$, which is an anti-chain (i.e., a discrete set) with respect to the information order of $L\times L$, and isomorphic to the Boolean algebra $B$ with respect to the logic order.

In Jung and Moshier \cite{JM2006,JM2008}, a d-lattice  $(L;\dbt,\dbf;{\bf con},{\bf tot})$  with $L$ being a frame is called a d-frame. However, following  \cite{JJP,KJM},  we reserve the term \emph{d-frame} for d-lattices  $(L;\dbt,\dbf;{\bf con},{\bf tot})$ for which $L$ is a frame and the consistency predicate {\bf con} is a Scott closed set under the information order. Such d-lattices are called \emph{reasonable d-frames} in   \cite{JM2006,JM2008}.  
A d-frame homomorphism $f:\mathcal{L} \longrightarrow \mathcal{L}'$ between d-frames is a frame homomorphism $f:L\longrightarrow L'$ that preserves $\dbt$, $\dbf$, ${\bf con}$ and ${\bf tot}$. The category of d-frames and d-frame homomorphisms is denoted by   $\bf{dFrm}$.   
For each frame $L$, the d-lattice $\omega(L)$ in Example \ref{omegaH} is a d-frame. This gives rise to a full and faithful functor  $$\omega\colon {\bf Frm}\lra{\bf dFrm}.$$ 
 
 \begin{exmp}\label{dO}(\cite{JM2006,JM2008}) For each bitopological space $(X, \tau_+, \tau_-)$, the structure  $$\dO(X, \tau_+, \tau_-)\coloneqq (L;\dbt,\dbf;{\bf con},{\bf tot})$$  is a d-frame, where  \begin{itemize} \setlength{\itemsep}{0pt}
 \item $L$ is the product frame $\tau_+\times\tau_-$; \item $\dbt=(X,\emptyset)$, $\dbf=(\emptyset,X)$; \item ${\bf con}=\{(U,V)\in \tau_+\times\tau_-: U\cap V=\emptyset\}$;   \item ${\bf tot}=\{(U,V)\in \tau_+\times\tau_-: U\cup V=X\}$. \end{itemize} 
In this way we obtain a contravariant functor $$\dO\colon {\bf BiTop}\lra {\bf dFrm}^{\rm op}.$$  
\end{exmp} 

It is known that the forgetful functor from the category of frames to that of distributive lattices has a left adjoint. As we shall see, so does the forgetful functor $U\colon \dFrm\longrightarrow\dLat$. 

Suppose  $(L;\dbt,\dbf;{\bf con},{\bf tot})$ is a d-lattice. We write $\idl L$ for the frame of ideals of the distributive lattice $L$. Then, the principal ideals $L_+=\thinspace\downarrow\!\dbt$ and $L_-=\thinspace\downarrow\!\dbf$ form a complementary pair of $\idl L$, and $$\idl\mathcal{L}\coloneqq(\idl L;L_+,L_-;{\bf con}_{\idl},{\bf tot}_{\idl})$$
is a d-frame,  where $${\bf con}_{\idl}=\{I\in\idl L: I\subseteq {\bf con}\},\quad {\bf tot}_{\idl}=\{I\in \idl L: I\cap {\bf tot}\neq\emptyset\}.$$ The d-frame $\idl\mathcal{L}$ is called the \emph{d-frame of ideals} of the d-lattice $\mathcal{L}$. In this way we obtain a functor
 $$\idl\colon \dLat\longrightarrow\dFrm.$$  
 
\begin{prop}\label{idl left adjoint to forgetful} The functor $\idl\colon \dLat\longrightarrow\dFrm$  is left adjoint to the  forgetful functor $U\colon \dFrm\longrightarrow\dLat$.  
\end{prop}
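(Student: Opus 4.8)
The plan is to lift the classical adjunction between bounded distributive lattices and frames to the d-level. First I would record that the forgetful functor $U\colon\dFrm\lra\dLat$ is well defined: a frame homomorphism preserves finite meets and arbitrary joins, hence finite joins, top and bottom, so it is in particular a bounded lattice homomorphism, and consequently every d-frame homomorphism is a d-lattice homomorphism. At the underlying-lattice level one has the familiar adjunction in which $\idl$ is left adjoint to the forgetful functor from {\bf Frm} to bounded distributive lattices: the map ${\downarrow}\colon L\lra\idl L$, $x\mapsto{\downarrow}x$, is a bounded lattice homomorphism, and every bounded lattice homomorphism $f\colon L\lra M$ into the carrier of a frame $M$ extends uniquely to a frame homomorphism $\hat f\colon\idl L\lra M$ with $\hat f(I)=\bigvee_{x\in I}f(x)$. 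My whole task is to check that this correspondence respects the d-structure, thereby yielding a bijection $\dFrm(\idl\mathcal{L},\mathcal{M})\cong\dLat(\mathcal{L},U\mathcal{M})$.

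Next I would verify that the unit ${\downarrow}\colon\mathcal{L}\lra U\idl\mathcal{L}$ is a d-lattice homomorphism. It preserves $\dbt$ and $\dbf$ by the very definition of $\idl\mathcal{L}$, whose distinguished elements are ${\downarrow}\dbt$ and ${\downarrow}\dbf$. If $x\in{\bf con}$, then since ${\bf con}$ is Scott closed and therefore a lower set, ${\downarrow}x\subseteq{\bf con}$, so ${\downarrow}x\in{\bf con}_{\idl}$. If $x\in{\bf tot}$, then $x\in{\downarrow}x\cap{\bf tot}$, so ${\downarrow}x\in{\bf tot}_{\idl}$. Hence ${\downarrow}$ preserves both predicates.

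The heart of the argument is to show that for a d-frame $\mathcal{M}$ and a d-lattice homomorphism $f\colon\mathcal{L}\lra U\mathcal{M}$, the extension $\hat f\colon\idl L\lra M$ preserves ${\bf con}$ and ${\bf tot}$ (it preserves $\dbt,\dbf$ automatically, being an extension of $f$). Because $f$ preserves $\dbt,\dbf$ it respects the decomposition, restricting to maps $L_+\lra M_+$ and $L_-\lra M_-$, so $f(a,b)=\bigl(f(a),f(b)\bigr)$. Writing an ideal of $L=L_+\times L_-$ as $I=I_+\times I_-$ and noting that joins in the product frame $M=M_+\times M_-$ are computed componentwise, we get $\hat f(I)=\bigl(\bigvee_{a\in I_+}f(a),\ \bigvee_{b\in I_-}f(b)\bigr)$, which is the directed join of the family $\{f(a,b):a\in I_+,\,b\in I_-\}$. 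For totality: if $I\in{\bf tot}_{\idl}$, pick $(a,b)\in I\cap{\bf tot}$; then $f(a,b)\in{\bf tot}_M$ and $f(a,b)\leq\hat f(I)$, so $\hat f(I)\in{\bf tot}_M$ since ${\bf tot}_M$ is an upper set. For consistency: if $I\in{\bf con}_{\idl}$, then $(a,b)\in{\bf con}$ for all $a\in I_+,\,b\in I_-$, so each $f(a,b)\in{\bf con}_M$; as the family is directed with join $\hat f(I)$ and ${\bf con}_M$ is Scott closed, we conclude $\hat f(I)\in{\bf con}_M$.

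Finally I would assemble the adjunction. By the second paragraph $g\mapsto g\circ{\downarrow}$ carries $\dFrm(\idl\mathcal{L},\mathcal{M})$ into $\dLat(\mathcal{L},U\mathcal{M})$, and by the third paragraph $f\mapsto\hat f$ goes the other way; these maps are mutually inverse because of the uniqueness in the underlying frame-level universal property, so they restrict the known lattice/frame hom-set bijection to the d-level. Naturality in both variables is inherited from that of the classical adjunction, the d-structure playing no role in the relevant commuting squares. I expect the only genuine obstacle to be the consistency clause for $\hat f$, where one must recognize the defining family as directed and invoke the Scott-closedness of ${\bf con}_M$; this is precisely the point at which the axioms of a d-frame are used, and everything else is routine transport of the classical result.
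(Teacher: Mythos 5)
Your proposal is correct and follows essentially the same route as the paper: the unit $a\mapsto{\downarrow}a$, the extension $\overline{f}(I)=\bigvee_{a\in I}f(a)$, and the frame-level universal property, with the d-structure checks done exactly where the paper declares them ``readily verified.'' Your verifications (lower-set property of ${\bf con}$ for the unit, upper-set property of ${\bf tot}$ and Scott-closedness of ${\bf con}$ for the directed join defining $\overline{f}$) are precisely the details the paper omits, so this is a faithful filled-in version of the paper's proof rather than a different argument.
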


\begin{proof} Let 
$\mathcal{L} =(L;\dbt,\dbf; {\bf con},{\bf tot})$ be a d-lattice.  It is clear that the assignment $a\mapsto \thinspace \downarrow\!a$ defines a d-lattice homomorphism $\eta\colon \mathcal{L}\lra \idl\mathcal{L}$. So, it suffices to show that for each d-lattice homomorphism $f\colon\mathcal{L} \longrightarrow\mathcal{M}$ with $\mathcal{M}=(M;\dbt,\dbf;{\bf con},{\bf tot})$ being a d-frame, there is a unique d-frame homomorphism $\overline{f}\colon \idl\mathcal{L}\lra\mathcal{M}$ such that  $f= \overline{f}\circ \eta$.
$$\bfig\qtriangle/->`->`-->/[\mathcal{L}`\idl\mathcal{L}`\mathcal{M};\eta` f`\overline{f}]\efig$$
The map   $\overline{f}\colon\idl L\longrightarrow M$ given by $\overline{f}(I)=\sup_{a\in I} f(a)$  is readily verified to be that unique d-frame homomorphism $\idl\mathcal{L} \lra\mathcal{M}$.
\end{proof}

In order to identify  d-frames  of the form $\idl\mathcal{L}$ for some d-lattice $\mathcal{L}$,  we need some notions. 

An element $x$ of a partially ordered set $P$ is \emph{finite} \cite{Gierz2003,Johnstone} if for each directed subset $D$ of $P$,   $x\sqsubseteq \sup D$ implies that $x\sqsubseteq d$ for some $d\in D$. 
A frame $L$ is \emph{coherent} \cite[page 63]{Johnstone} if \begin{enumerate}[label=\rm(\roman*)] \setlength{\itemsep}{0pt}
\item Every element of $L$ is expressible as a join of finite elements; and 
\item The finite elements form a sublattice of $L$, i.e., 1 is finite, and the meet of two finite elements is finite.
\end{enumerate}

Coherent frames are precisely  frames of ideals of   distributive lattices, see  \cite[page 64]{Johnstone}. 
A d-frame $\mathcal{L}=(L;\dbt,\dbf;{\bf con},{\bf tot})$ is said to be \emph{compact} \cite[Definition 7.5]{JM2008} if the totality predicate {\bf tot} is a Scott open set of $(L,\sqsubseteq)$.  

\begin{lem}\label{t and f are comapct}
Let $\mathcal{L}=(L;\dbt,\dbf;{\bf con},{\bf tot})$ be a compact d-frame. Then both $\dbt$ and $\dbf$ are finite elements of the lattice $(L,\sqsubseteq)$. 
\end{lem}

\begin{proof}
It suffices to show that $\dbt$ is a finite element of  $(L_{+},\sqsubseteq)$ and $\dbf$ is  a finite element of  $(L_{-},\sqsubseteq)$. Suppose $\{a_{i}\}_{i\in I}$ is a directed subset  of  $(L_{+},\sqsubseteq)$ with $\dbt\sqsubseteq \sup_{i\in I}a_i$. Since  $\dbt \in{\bf tot}$, then $a_i \in{\bf tot}$ for some $i\in I$ by compactness of $\mathcal{L}$. Since   $\dbt\in{\bf con}$ and $\dbt\sqcap\dbf=a_i\sqcap\dbf$, it follows from ({\bf con}-{\bf tot}) that $\dbt\sqsubseteq a_i$. Therefore  $\dbt$ is a finite element of  $(L_{+},\sqsubseteq)$. Likewise, $\dbf$ is a finite element of  $(L_{-},\sqsubseteq)$.
\end{proof} 

\begin{prop}\label{d-lattice--d-frame}
A d-frame $\mathcal{L}=(L;\dbt,\dbf;{\bf con},{\bf tot})$ is isomorphic to the d-frame of ideals of a d-lattice if and only if it is compact and the underlying frame $L$ is coherent. 
\end{prop}

\begin{proof} It is clear that for each d-lattice $\mathcal{M}=(M;\dbt,\dbf;{\bf con},{\bf tot})$, the d-frame $\idl\mathcal{M}$ is compact and the frame $\idl M$ is coherent, so the necessity follows.

For sufficiency, suppose $\mathcal{L}=(L;\dbt,\dbf;{\bf con},{\bf tot})$ is a compact d-frame with the underlying frame $L$ being coherent. Let $K(L)$ be the set of finite elements of $(L,\sqsubseteq)$. By Lemma \ref{t and f are comapct} both $\dbt$ and $\dbf$ belong to $K(L)$.  Since $L$ is a coherent frame, $K(L)$ is a sublattice of $L$, hence a distributive lattice. The  structure $$K(\mathcal{L})\coloneqq (K(L);\dbt,\dbf;{\bf con}_{K(L)},{\bf tot}_{K(L)})$$ is then a d-lattice, where   ${\bf con}_{K(L)}={\bf con}\cap K(L)$ and ${\bf tot}_{K(L)}={\bf tot}\cap K(L)$. We assert that  $\mathcal{L}$ is isomorphic to the d-frame of ideals of $K(\mathcal{L})$.
  
Define $\epsilon\colon \idl K(L)\longrightarrow L$ and $\kappa\colon L\longrightarrow\idl K(L)$  by
$$\epsilon(I)=\sup I \quad\text{and}\quad \kappa(a)=\thinspace \downarrow\! a\cap K(L). $$
Since $L$ is a coherent frame, both $\kappa$ and $\epsilon$ are frame homomorphisms and are inverse to each other.
Thus, what remains to check is that both $\kappa\colon \mathcal{L}\lra \idl K(\mathcal{L})$ and $\epsilon\colon  \idl K(\mathcal{L})\lra\mathcal{L} $ preserve consistency and totality. We check that $\kappa$ preserves totality for example. Suppose $a\in L_+$, $b\in L_-$ and $a\sqcup b\in{\bf tot}$. Since $\kappa(a)$  is a directed subset of $L_+$ and $\kappa(b)$ is a directed subset of $L_-$, it follows that  $\{x\sqcup y :x\in\kappa(a), y\in\kappa(b)\}$ is a directed subset of $L$. Since $a$ is the join of $\kappa(a)$, $b$ is the join of $\kappa(b)$, it follows that $a\sqcup b$ is the join of $ \{x\sqcup y:x\in\kappa(a),y\in\kappa(b)\}$, hence  
by compactness of $\mathcal{L}$, there exist some $x\in\kappa(a)$ and $y\in\kappa(b)$ such that $x\sqcup y\in{\bf tot}$. So  $(\kappa(a)\sqcup\kappa(b))\cap {\bf tot}\neq\emptyset$. Therefore, $\kappa(a\sqcup b)$, which is equal to $\kappa(a)\sqcup\kappa(b)$, belongs to the totality predicate of the d-frame $\idl K(\mathcal{L})$. \end{proof}

A d-frame  $\mathcal{L}=(L;\dbt,\dbf;{\bf con},{\bf tot})$ is called a \emph{coherent d-frame} if it is compact and the underlying frame $L$ is coherent. In other words, coherent d-frames are the d-frames of ideals of d-lattices. A d-frame homomorphism $f\colon \mathcal{L}\lra\mathcal{M}$ between coherent d-frames is  \emph{coherent} if the underlying frame homomorphism $f\colon L\lra M$   preserves finite elements. 

\begin{cor}\label{d-lattice vs d-frame} The category of d-lattices is equivalent to the category of  coherent d-frames and coherent d-frame homomorphisms. \end{cor}
\begin{proof} This follows from Proposition \ref{d-lattice--d-frame} immediately. \end{proof}

Now we introduce the notion of d-Boolean algebras. The relationship between d-Boolean algebras and  d-lattices is analogous to that between Boolean algebras and distributive lattices.

\begin{lem}\label{uniqueness of adag} Let $(L;\dbt,\dbf;{\bf con},{\bf tot})$ be a d-lattice. Then for each element $a$ of $L_+$, there is at most one element $b$ of $L_-$ for which $a\sqcup b\in {\bf con}\cap {\bf tot}$. Likewise, for each element $b$ of $L_-$, there is at most one element $a$ of $L_+$ for which $a\sqcup b\in {\bf con}\cap {\bf tot}$.   \end{lem} 

\begin{proof} Suppose   $b_1$ and $b_2$ are elements of $L_-$ such that both $a\sqcup b_1$ and $a\sqcup b_2$ belong to the intersection ${\bf con}\cap {\bf tot}$. Then $a\sqcup(b_1\sqcup b_2)\in{\bf con}$ and  $a\sqcup (b_1\sqcap b_2)\in{\bf tot}$. Since $$(a\sqcup(b_1\sqcup b_2))\sqcap \dbt=a=(a\sqcup (b_1\sqcap b_2))\sqcap  \dbt , $$ it follows from  ({\bf con}--{\bf tot}) that $a\sqcup(b_1\sqcup b_2)\sqsubseteq a\sqcup (b_1\sqcap b_2))$, hence  $$b_1\sqcup b_2= (a\sqcup(b_1\sqcup b_2))\sqcap\dbf\sqsubseteq (a\sqcup (b_1\sqcap b_2)))\sqcap\dbf = b_1\sqcap b_2,$$  then $b_1=b_2$, as desired. \end{proof}

\begin{prop}\label{d-complement is order-reversing} Suppose $(L;\dbt,\dbf;{\bf con},{\bf tot})$ is a d-lattice, $a_1,a_2\in L_+$, and   $b_1,b_2\in L_-$.   If $a_1\sqcup b_1, a_2\sqcup b_2 \in{\bf con} \cap {\bf tot}$, then $a_1\sqsubseteq a_2$ implies   $b_1\sqsupseteq b_2$. Therefore, ${\bf con} \cap {\bf tot}$ is an anti-chain of $L$ under the information order $\sqsubseteq$. \end{prop}

\begin{proof} Since ${\bf con} \cap {\bf tot}$ is a sublattice of $L$ with respect to the logic order $\leq$, it follows that   $(a_1\sqcup a_2)\sqcup(b_1\sqcap b_2)$, which is the join of $a_1\sqcup b_1$ and $a_2\sqcup b_2$ under the logic order $\leq$, belongs to ${\bf con} \cap {\bf tot}$.   If $a_1\sqsubseteq a_2$, then $a_1\sqcup a_2=a_2$, hence $b_2= b_1\sqcap b_2$ by Lemma \ref{uniqueness of adag}, which implies $b_1\sqsupseteq b_2$. \end{proof}

\begin{defn}(\cite[Definition 2.2.2]{Klinke})
Let $(L;\dbt,\dbf;{\bf con},{\bf tot})$ be a d-lattice and $x\in L$. We say that $x$ is  d-complemented if,  \begin{itemize} \setlength{\itemsep}{0pt} \item either $x\in L_+$ and there is  some   $x^\dag\in L_-$ such that $x\sqcup x^\dag\in{\bf con} \cap {\bf tot}$, \item or $x\in L_-$ and there is some  $x^\dag\in L_+$ such that $x^\dag\sqcup x\in{\bf con} \cap {\bf tot}$. \end{itemize}  In this case $x^\dag$ is called a  d-complement of $x$. \end{defn}

\begin{defn} \label{defn of d-boolean} A d-lattice $(L;\dbt,\dbf; {\bf con}, {\bf tot})$ is called a d-Boolean algebra if it is d-complemented in the sense that all elements of $L_+$ and all elements  of $L_-$ are d-complemented. 
The full subcategory of $\dLat$ composed of d-Boolean algebras is denoted by  $\dBoo.$ \end{defn}  

For each distributive lattice $L$, the d-lattice $\omega(L)$ is a d-Boolean algebra if and only if $L$ is a Boolean algebra. So, we have a full and faithful functor $$\omega\colon{\bf BA}\lra{\bf dBA}.$$   

Suppose $(L;\dbt,\dbf;{\bf con},{\bf tot})$ is a  d-Boolean algebra. By Lemma \ref{uniqueness of adag} and Proposition \ref{d-complement is order-reversing} one sees that taking d-complement defines an order-reversing isomorphism $ ^\dagger\colon L_+\lra L_-$. Furthermore, the consistency predicate {\bf con} and the totality predicate {\bf tot} are determined by the order-reversing isomorphism as follows:  $${\bf con}=\{a\sqcup b: a\in L_+, b\in  L_-, a^\dag\sqsupseteq b \},\quad  {\bf tot}=\{a\sqcup b: a\in L_+, b\in  L_-,a^\dag\sqsubseteq b\}.$$ This shows that the structure of a d-Boolean algebra is completely determined by the pair $(L_+, L_-)$ of distributive lattices together with the order-reversing isomorphism $^\dagger\colon L_+\lra L_-$.  

As shall be seen below, the category of d-Boolean algebras is equivalent to the category of distributive lattices, though d-Boolean algebras look a bit different from distributive lattices. To see this, we define a category {\bf DBL} as follows: \begin{itemize}\setlength{\itemsep}{0pt} \item objects: an object is a triple $(L_+,L_-, ^\dag)$, where $L_+,L_-$ are distributive lattices and $ ^\dag\colon L_+\lra L_-$ is an order-reversing isomorphism. \item morphisms: a morphism from $(L_+,L_-, ^\dag)$ to $(M_+,M_-, ^\dag)$ is a pair $(f,g)$, where $f\colon L_+\lra M_+$ and $g\colon L_-\lra M_-$ are two lattice homomorphisms such that $g(a^\dag)=f(a)^\dag$ for all $a\in L_+$. $$\bfig\square[L_+` L_-`M_+` M_-; ^\dag`f`g` ^\dag] \efig $$\end{itemize}

\begin{lem}\label{d-Bool as DBL} The category $\dBoo$ of d-Boolean algebras is isomorphic to the category {\bf DBL}. \end{lem}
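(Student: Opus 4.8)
The plan is to exhibit an explicit isomorphism of categories $\Theta\colon\dBoo\lra{\bf DBL}$ that merely repackages the two descriptions of the same data. On objects, I would send a d-Boolean algebra $\mathcal{L}=(L;\dbt,\dbf;{\bf con},{\bf tot})$ to the triple $(L_+,L_-,{}^\dag)$, where ${}^\dag\colon L_+\lra L_-$ is the d-complement isomorphism. The discussion preceding Proposition \ref{dbool via order reversing} shows that ${\bf con}=\{(a,b):a\leq b^\dag\}$ and ${\bf tot}=\{(a,b):a^\dag\leq b\}$ are recovered from ${}^\dag$ alone, while Proposition \ref{dbool via order reversing} shows conversely that every such triple is the image of a unique d-Boolean algebra. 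Hence $\Theta$ is a bijection on objects, with inverse supplied by Proposition \ref{dbool via order reversing}.

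The key step is the treatment of morphisms. Given a d-lattice homomorphism $\Psi\colon\mathcal{L}\lra\mathcal{M}$ between d-Boolean algebras, I would first show that it splits as a product. Since $\Psi$ is a lattice homomorphism preserving $\dbt=(1,0)$ and $\dbf=(0,1)$, writing $\Psi(a,0)=(u,v)$ and using $\Psi(a,0)\wedge\Psi(\dbf)=\Psi(0)=0$ forces $v=0$; symmetrically $\Psi(0,b)$ has first coordinate $0$. Thus there are bounded lattice homomorphisms $f\colon L_+\lra M_+$ and $g\colon L_-\lra M_-$ with $\Psi(a,0)=(f(a),0)$ and $\Psi(0,b)=(0,g(b))$, and since $(a,b)=(a,0)\vee(0,b)$ under the information order we obtain $\Psi(a,b)=(f(a),g(b))$. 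Setting $\Theta(\Psi)=(f,g)$ then makes $\Theta$ a functor, as composition and identities are respected coordinatewise.

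It remains to match the compatibility conditions. Because ${\bf con}\cap{\bf tot}=\{(a,a^\dag):a\in L_+\}$ for a d-Boolean algebra (using Lemma \ref{uniqueness of adag} and the characterizations of ${\bf con},{\bf tot}$ above) and $\Psi$ preserves both predicates, $\Psi$ carries $(a,a^\dag)$ into ${\bf con}_M\cap{\bf tot}_M=\{(c,c^\dag):c\in M_+\}$; computing $\Psi(a,a^\dag)=(f(a),g(a^\dag))$ then yields exactly $g(a^\dag)=f(a)^\dag$, the defining condition for a morphism of {\bf DBL}. Conversely, given such a pair $(f,g)$, I would define $\Psi(a,b)=(f(a),g(b))$ and verify it is a d-lattice homomorphism: it preserves $\dbt,\dbf$ and both lattice operations coordinatewise, while preservation of ${\bf con}$ and ${\bf tot}$ follows from $g(a^\dag)=f(a)^\dag$, rewriting an arbitrary $b\in L_-$ as $b=c^\dag$ and using that ${}^\dag$ is an order-reversing bijection with $(c^\dag)^\dag=c$. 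This gives a bijection between the hom-sets compatible with composition, so $\Theta$ is an isomorphism of categories.

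The main obstacle is the morphism correspondence rather than the object correspondence: one must check both that preservation of ${\bf con}$ and ${\bf tot}$ forces the single equation $g(a^\dag)=f(a)^\dag$, and---more delicately---that this one equation already suffices to recover preservation of \emph{both} predicates for the product map. The splitting of $\Psi$ into a coordinatewise product, though elementary, is the structural fact that makes everything line up, and I would establish it first.
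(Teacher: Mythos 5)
Your proposal is correct and follows essentially the same route as the paper's proof: the same object correspondence via the d-complement map ${}^\dag$ and Proposition \ref{dbool via order reversing}, the same coordinatewise splitting of a d-lattice homomorphism into a pair $(f,g)$ with $g(a^\dag)=f(a)^\dag$ extracted from preservation of ${\bf con}\cap{\bf tot}$, and the same inverse functor $(f,g)\mapsto f\times g$. The only difference is that you spell out in more detail the splitting argument and the verification that the single equation $g(a^\dag)=f(a)^\dag$ suffices to recover preservation of both predicates, which the paper leaves as routine.
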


\begin{proof} For each d-Boolean algebra $(L;\dbt,\dbf;{\bf con},{\bf tot})$, the triple $(L_+,L_-, ^\dag)$ is an object of {\bf DBL}, where $ ^\dag\colon L_+\lra L_-$ takes each $a\in L_+$ to its d-complement, which is an element of $L_-$. Suppose $h\colon (L;\dbt,\dbf;{\bf con},{\bf tot})\lra (M;\dbt,\dbf;{\bf con},{\bf tot})$ is a morphism (i.e. a d-lattice homomorphism) between d-Boolean algebras. Since $h$ maps $L_+$ to $M_+$ and maps $L_-$ to $M_-$, both the restriction $f$ of $h$ to $L_+$ and $M_+$ and the restriction $g$ of $h$ to $L_-$ and $M_-$  are lattice homomorphisms. Since $h$ preserves {\bf con} and {\bf tot}, it follows that for each $a\in L_+$, $h(a^\dag)$ is the d-complement of $h(a)$; that is, $g(a^\dag)=f(a)^\dag$. So we obtain a functor  $F\colon\dBoo\lra {\bf DBL}.$  

For each object $(L_+,L_-, ^\dag)$ of {\bf DBL}, it is readily seen that $(L_+\times L_-;\dbt,\dbf;{\bf con},{\bf tot})$ is a d-Boolean algebra, where \begin{itemize}\setlength{\itemsep}{0pt}
\item   $\dbt=(1,0)$, $\dbf=(0,1)$, \item ${\bf con}=\{(a,b)\in L_+\times L_-: a^\dag\sqsupseteq b\}$, \item ${\bf tot}=\{(a,b)\in L_+\times L_-: a^\dag\sqsubseteq b\}$.\end{itemize} For each morphism $(f,g)\colon (L_+,L_-, ^\dag)\lra (M_+,M_-, ^\dag)$ in the category {\bf DBL},  the product $f\times g$  is a morphism $(L_+\times L_-;\dbt,\dbf;{\bf con},{\bf tot})\lra (M_+\times M_-;\dbt,\dbf;{\bf con},{\bf tot})$ between  d-Boolean algebras. So we obtain a functor  $G\colon {\bf DBL}\lra\dBoo.$   

The functors $F$ and $G$ are readily verified to be inverse to each other, so the categories $\dBoo$ and {\bf DBL} are isomorphic to each other. \end{proof}

\begin{cor}\label{dBA via order reversing} The consistency predicate {\bf con} of each d-Boolean algebra is a Scott closed set under the information order. \end{cor} 

\begin{proof} It suffices to check that for each object $(L_+,L_-, ^\dag)$ of the category {\bf DBL},   the consistency predicate of the corresponding  d-Boolean algebra $(L_+\times L_-;\dbt,\dbf;{\bf con},{\bf tot})$ is closed under directed joins; that is, the set  $${\bf con}=\{(a,b)\in L_+\times L_-: a^\dag\sqsupseteq b\}$$  is closed under directed joins in the product lattice $L_+\times L_-$. 

Suppose  $\{(a_i,b_i)\}_{i\in D}$ is a directed subset of ${\bf con}$ with a join $(a,b)$ in $L_+\times L_-$. First we show that each $a_j^\dag$ is an upper bound of $\{b_i\}_{i\in D}$, hence $b\sqsubseteq a_j^\dag$. For each $i\in D$, pick some $k\in D$ such that $(a_i,b_i) \sqsubseteq (a_k,b_k)$ and $(a_j,b_j)\sqsubseteq (a_k,b_k)$. Then  $b_i\sqsubseteq b_k\sqsubseteq  a_k^\dag \sqsubseteq  a_j^\dag, $  which shows that $ a_j^\dag$ is an upper bound of $\{b_i\}_{i\in D}$ in $L_-$. Next we show that $(a,b)\in{\bf con}$. Since the map $ ^\dag\colon L_+\lra L_-$ transforms joins to meets, it follows that $$ a^\dag=\big(\sup_{i\in D} a_i\big)^\dag= \inf_{i\in D}a_i^\dag \sqsupseteq b,$$ and consequently, $(a,b)\in{\bf con}$. \end{proof}

\begin{prop}\label{DLat and DBL} The category of d-Boolean algebras is  equivalent to the category of distributive lattices. 
\end{prop}

\begin{proof} 
Let $L$ be a distributive lattice. Since the correspondence  $x\mapsto x$ is an order-reversing isomorphism $L\lra L^{\rm op}$,  it follows that $$\lambda(L)\coloneqq (L\times L^{\rm op} ;\dbt,\dbf;{\bf con},{\bf tot})$$ is a d-Boolean algebra, where  \begin{itemize} \setlength{\itemsep}{0pt} 
\item $\dbt=(1,1)$, $\dbf=(0,0)$; \item ${\bf con}=\{(a,b)\in L\times L: a\sqsubseteq b\}$;   \item ${\bf tot}=\{(a,b)\in L\times L: a\sqsupseteq b\}$. \end{itemize} \begin{center}
\begin{tikzpicture}[scale=0.9] \fill[lightgray] (0,0) -- (3,0) -- (3,3) -- cycle;
\draw[thick] (0,0) -- (3,0) -- (3,3) -- cycle;
\draw[thick] (0,0) -- (0,3) -- (3,3) -- cycle;
\node at (2,1) {{\bf tot}}; \node at (1,2) {{\bf con}}; 
\node at (3.3,3) {$\dbt$}; \node at (-0.4,0) {$\dbf$};
\end{tikzpicture} \end{center}  

In this way we obtain a functor  $$\lambda\colon{\bf DisLat}\lra\dBoo,$$ which is an equivalence of categories, as we shall see.  

Since the consistency predicate and the totality predicate  of the d-Boolean algebra $\lambda(L)$ are  determined by the order relation of $M$, it is clear that the functor $\lambda$ is full and faithful. It remains to check that it is essentially surjective on objects.  For this we show that each d-Boolean algebra $(L;\dbt,\dbf;{\bf con},{\bf tot})$ is isomorphic to $\lambda(L_+)$.

Since the square  $$ \bfig \square[L_+`L_-`L_+`L_+^{\rm op}; ^\dag`{\rm id}`^\dag`{\rm id}] \efig$$ is commutative,  where $ ^\dag$ is the map sending each element to its d-complement,   {\rm id} is the identity map on the set $L_+$, it follows that $(L_+,L_-,^\dag)$ is isomorphic to $(L_+,L_+^{\rm op},{\rm id})$ in the category {\bf DBL}, then the conclusion follows from Lemma \ref{d-Bool as DBL}. \end{proof} 
  
The next proposition says that the category of d-Boolean algebras is a coreflective  subcategory of that of d-lattices. 
For each d-lattice $(L;\dbt,\dbf;{\bf con},{\bf tot})$, let $$B_+=\{a\in L_+: a~\text{is d-complemented}\}, \quad B_-=\{b\in L_-: b~\text{is d-complemented}\}.$$ Then  $B_+$ is a sublattice of $L_+$;   $B_-$ is a sublattice  of $L_-$.  
Let $$\dB L=\{a\sqcup b: a\in B_+,b\in B_-\}. $$ Then $\dB L$ is a sublattice of $L$ and contains $\dbt$ and $\dbf$; the structure $$\dB\mathcal{L}\coloneqq(\dB L;\dbt,\dbf;{\bf con}_{\dB},{\bf tot}_{\dB})$$ is a d-Boolean algebra,  where $${\bf con}_{\dB}={\bf con}\cap \dB L, \quad {\bf tot}_{\dB}={\bf tot}\cap \dB L.$$ 
Thus, we have a functor $$\dB\colon \dLat \longrightarrow \dBoo.$$
The following proposition says that $\dB\mathcal{L}$ is the d-Boolean algebra coreflection of  $\mathcal{L}$.   

\begin{prop} The functor $\dB\colon \dLat \longrightarrow \dBoo$ is right adjoint to the inclusion functor $V\colon \dBoo\longrightarrow\dLat$. 
\end{prop}

\begin{proof} We show that for each  d-lattice  $\mathcal{L}=(L;\dbt,\dbf;{\bf con},{\bf tot})$, the d-Boolean algebra  $\dB\mathcal{L}=(\dB L;\dbt,\dbf;{\bf con}_{\dB},{\bf tot}_{\dB})$ is its $\dBoo$-coreflection. The inclusion map $i\colon \dB L\lra L$ is clearly a d-lattice homomorphism $\dB\mathcal{L}\lra\mathcal{L}$, so, it suffices to check that each d-lattice homomorphism $f\colon \mathcal{M}\lra\mathcal{L}$ with $\mathcal{M}= (M;\dbt,\dbf;{\bf con},{\bf tot})$ being a d-Boolean algebra factors through  $i\colon\dB\mathcal{L}\lra\mathcal{L}$. $$\bfig\ptriangle/->`<--`<-/[\dB\mathcal{L}`\mathcal{L}`\mathcal{M};i` `f]\efig$$
This follows directly from the fact that  
if $x\in M$ is d-complemented then so is $f(x)$. \end{proof}

\begin{exmp}
Suppose $(X, \tau_+, \tau_-)$ is a bitopological space. Let  $$L_+=\{U\in\tau_+: X\setminus U\in\tau_-\},\quad L_-=\{V\in\tau_-: X\setminus V\in\tau_+\}.$$ Put differently, an element of $L_+$ is a $\tau_+$-open and $\tau_-$-closed set; likewise for $L_-$. Since for each subset $U$ of $X$, $ U\in L_+$ if and only if $X\setminus U\in L_-$,   the correspondence $U\mapsto X\setminus U$ is an order-reversing isomorphism   $L_+\lra L_-$. So,  the structure $(L;\dbt,\dbf;{\bf con},{\bf tot})$  is a d-Boolean algebra, where  \begin{itemize}\setlength{\itemsep}{0pt}
\item $L=L_+\times L_-$; \item $\dbt=(X,\emptyset)$, $\dbf=(\emptyset,X)$; \item ${\bf con}=\{(U,V)\in L_+\times L_-: U\cap V=\emptyset\}$;   \item ${\bf tot}=\{(U,V)\in L_+\times L_-: U\cup V=X\}$. \end{itemize}  
The d-Boolean algebra $(L;\dbt,\dbf;{\bf con},{\bf tot})$  is called the \emph{d-Boolean algebra of d-clopen sets} of $(X, \tau_+, \tau_-)$. In this way we obtain a contravariant functor  $$\dClop\colon{\bf BiTop}\longrightarrow{\bf dBA}^{\rm op}.$$   

Since a $\tau_+$-open set $U$ is d-complemented in the d-frame  $\dO(X, \tau_+, \tau_-)$   if and only if $U$ is $\tau_-$-closed, a $\tau_-$-open set $V$ is d-complemented in   $\dO(X, \tau_+, \tau_-)$  if and only if $V$ is $\tau_+$-closed, it follows that the d-Boolean algebra of d-clopen sets of $(X,\tau_+,\tau_-)$ is the d-Boolean algebra coreflection of the d-frame $\dO (X,\tau_+,\tau_-)$. That means, $\dClop=\dB\circ\dO.$    \end{exmp}

Composing  $V\dashv\dB\colon \dLat\lra\dBoo$ with   $\idl\dashv U\colon \dFrm\lra\dLat$ yields an adjunction $$\idl\dashv \dB\colon \dFrm\lra \dBoo.$$ The ``fixed points'' of this adjunction presents a representation of d-Boolean algebras by d-frames. 
A d-frame  $\mathcal{L}=(L;\dbt,\dbf;{\bf con},{\bf tot})$ is said to be \emph{d-zero-dimensional}  \cite[Definition 2.3.6]{Jakl}  if every element of $L$ is the join of a set of d-complemented elements.  
  
\begin{thm} \label{dBA vs dfrm}
The category of d-Boolean algebras is equivalent to the category of compact and d-zero-dimensional d-frames.
\end{thm}

\begin{proof} 
Write {\bf KZdFrm} for the category of compact and d-zero-dimensional d-frames. Then by Proposition \ref{one side of the duality} and Proposition \ref{other side of the duality} below, the functors $\dB\colon {\bf KZdFrm}\lra\dBoo$ and $\idl\colon \dBoo\lra{\bf KZdFrm}$ witness the equivalence of the categories $\dBoo$ and {\bf KZdFrm}. \end{proof}

\begin{prop}\label{one side of the duality} Each   d-Boolean algebra $\mathcal{L} =(L;\dbt,\dbf; {\bf con},{\bf tot})$  is isomorphic to the d-Boolean algebra    $\dB\circ\idl \mathcal{L}$. 
\end{prop}

\begin{proof} This follows immediately from the following lemma.
\end{proof}

\begin{lem}\label{d-complemented ideal} Let  
$\mathcal{L} =(L;\dbt,\dbf; {\bf con},{\bf tot})$ be a d-lattice. Then an ideal $I$ of the distributive lattice $L$ is d-complemented in the d-frame   $\idl\mathcal{L}$ if and only if $I= \thinspace \downarrow\!x$ for some d-complemented element $x$ of $\mathcal{L}$. \end{lem} 

\begin{proof} For sufficiency suppose $x$ is a d-complemented element of $\mathcal{L}$. Then by definition, either $x\in L_+$ or $x\in L_-$. Without loss of generality we assume that  $x\in L_+$.  Then $\downarrow\!x\sqsubseteq\thinspace \downarrow\!\dbt$ and $\downarrow\!x^\dag\sqsubseteq\thinspace \downarrow\!\dbf$. In the frame $\idl L$ it holds that $(\downarrow\!x)\sqcup(\downarrow\!x^\dag)= \thinspace \downarrow\!(x\sqcup x^\dag)$, the latter clearly belongs to ${\bf con}_{\idl}\cap{\bf tot}_{\idl},$  so the ideal $\downarrow\!x^\dag$ is a d-complement of the ideal $\downarrow\!x$ in the d-frame $\idl\mathcal{L}$, which implies that $\downarrow\!x$ is  d-complemented. 

For necessity  suppose $I$ is a d-complemented element of $\idl \mathcal{L}$. By definition either $I\subseteq L_+$ or $I\subseteq L_-$. Without loss of generality we assume that $I\subseteq L_+$. Since $I$ is d-complemented, there is an ideal $J$ of $L$ contained in $L_-$ for which the join $I\sqcup J$ belongs to both ${\bf con}_{\idl}$ and ${\bf tot}_{\idl}$. So there exist $a\in I$ and $b\in J$ such that $a\sqcup b\in{\bf con}\cap{\bf tot}$. We finish the proof by  showing that $I=\thinspace\downarrow\!a$. For each $a'\in I$ take an upper bound $a''$ of $a$ and $a'$ in $I$, then $a''\sqcup b\in I\sqcup J$, which implies $a''\sqcup b\in {\bf con}\cap{\bf tot}$, hence $a=a''$ by Lemma \ref{uniqueness of adag} and consequently, $a'\sqsubseteq a$ as desired.  \end{proof}

\begin{prop}\label{other side of the duality} A d-frame  $\mathcal{L}=(L;\dbt,\dbf;{\bf con},{\bf tot})$ is   compact and d-zero-dimensional if and only if $\mathcal{L}$ is isomorphic to the d-frame   $ \idl\circ\dB \mathcal{L}$. 
\end{prop}
 
\begin{lem}\label{each element of dBL is finite}
Let $\mathcal{L}=(L;\dbt,\dbf;{\bf con},{\bf tot})$ be a compact d-frame. Then every element of the underlying lattice $\dB L$  of its d-Boolean algebra coreflection $\dB\mathcal{L}$ is finite.  \end{lem}  

\begin{proof}
Since $\dB L$ is isomorphic to the product lattice $B_+\times B_-$, where $B_+$ and $B_-$ are, respectively, the sublattices  of d-complemented elements in $L_+$ and $L_-$, it suffices to show that every element of $B_+$ is finite, and that every element of $B_-$ is finite. 

Let $a\in B_+$  and let $D$ be a directed subset of $B_+$ such that $a\sqsubseteq\sup D$. Then $\{d\sqcup a^\dag:d\in D\}$ is a directed set of $L$ having $(\sup D)\sqcup a^\dag$ as a join. Since $a\sqcup a^\dag \in{\bf tot}$, then $(\sup D)\sqcup a^\dag\in{\bf tot}$, hence $d\sqcup a^\dag\in{\bf tot}$ for some $d\in D$ because {\bf tot} is Scott open. Since $a\sqcup a^\dag\in{\bf con}$, then $a\sqsubseteq d$ by ({\bf con}-{\bf tot}). Likewise, every element of $B_-$ is finite.  \end{proof}

\begin{proof}[Proof of Proposition \ref{other side of the duality}]  The d-frame of ideals of each d-Boolean algebra is readily verified to be compact and d-zero-dimensional, the sufficiency thus follows. For necessity, we show that if  $\mathcal{L}=(L;\dbt,\dbf;{\bf con},{\bf tot})$ is a compact and d-zero-dimensional d-frame, then it is isomorphic to  the d-frame $ \idl\circ\dB \mathcal{L}$. Define $\epsilon\colon\idl(\dB L)\longrightarrow L$ and $\kappa\colon L\lra \idl(\dB L)$ by $$\epsilon(I)=\sup I  \quad \text{and}\quad \kappa(x) = \thinspace\downarrow\!x  \cap\dB L  $$ for each ideal $I$ of $\dB L$ and each element $x$ of $L$. 

Both   $\epsilon\colon \idl\circ\dB  \mathcal{L}  \longrightarrow \mathcal{L} $ and $\kappa\colon  \mathcal{L}\lra \idl\circ\dB  \mathcal{L} $ are d-frame homomorphisms. The verification is routine,  we  check that $\kappa$ preserves the totality predicate  for example. Suppose $x\in{\bf tot}$. Since $\mathcal{L}$ is d-zero-dimensional, the join of $\kappa(x)$ is $x$.  Since {\bf tot} is Scott open, then $\kappa(x)$ meets {\bf tot}, hence $\kappa(x)$ belongs to the totality predicate of the d-frame $ \idl\circ\dB \mathcal{L}$. 

Since $\mathcal{L}$ is d-zero-dimensional,  the composite $\epsilon\circ\kappa$ is the identity on $L$. Since $\mathcal{L}$ is compact and d-zero-dimensional, Lemma \ref{each element of dBL is finite} ensures that the composite $\kappa\circ\epsilon$ is the identity on $\idl(\dB L)$. Therefore, $\mathcal{L}$ is isomorphic to  $\idl\circ\dB \mathcal{L}$. 
\end{proof}

\begin{rem}   
It is proved in Jakl \cite[Section  2.6]{Jakl} that the category of distributive lattices is equivalent to the category of compact and d-zero-dimensional d-frames. 
The equivalence in Theorem \ref{dBA vs dfrm} is an immediate consequence of the result of Jakl and the equivalence between distributive lattices and d-Boolean algebras (Proposition \ref{DLat and DBL}). As pointed out by an anonymous referee, the equivalence   $\mathcal{IF}$, constructed in \cite[Section  2.6]{Jakl}, between the category of distributive lattices and the category of compact and d-zero-dimensional d-frames coincides with the composite   $\idl\circ\lambda\colon {\bf DisLat} \lra \dBoo \lra{\bf KZdFrm}.$   \end{rem}

Let {\bf KZFrm} be the full subcategory of the category {\bf Frm}  composed of compact and d-zero-dimensional frames; let $\Ba\colon {\bf Frm}\lra{\bf BA}$ be the functor that sends each frame to the Boolean algebra of its complemented elements. Restricting the  domain of the functor $\Ba\colon {\bf Frm}\lra{\bf BA}$ to {\bf KZFrm} gives rise to an equivalence between the category of Boolean algebras and the category of compact and zero-dimensional frames. This is actually the frame version (or, point-free version) of the Stone representation of Boolean algebras. 
 
We end this section with a representation of complete d-Boolean algebras by d-frames. A d-Boolean algebra $(L;\dbt,\dbf;{\bf con}, {\bf tot})$ is said to be \emph{complete} if the distributive lattice $L$ is complete. It is trivial that a d-Boolean algebra $(L;\dbt,\dbf;{\bf con}, {\bf tot})$ is complete if and only if both $L_+$ and $L_-$ are complete lattices.  We state the conclusion first.

\begin{prop} \label{complete dBA vs dfrm} The  category of complete d-Boolean algebras and d-Boolean algebra homomorphisms is equivalent to the category of extremally disconnected, compact and d-zero-dimensional d-frames.
\end{prop}

Recall that an element $a$ of a  lattice $L$ is \emph{pseudo-complemented} if there is some $\neg a$ of $L$ (necessarily unique) such that for all $b\in L$, $a\sqcap b=0\iff b\sqsubseteq \neg a$. For example, every element of a frame is pseudo-complemented.

\begin{defn}
Let $(L;\dbt,\dbf;{\bf con},{\bf tot})$ be a d-lattice and $x\in L$. We say that $x$ is  d-pseudo-complemented if, \begin{itemize} \setlength{\itemsep}{0pt} \item either $x\in L_+$ and there is  some  $x^*\in L_-$  such that   $x\sqcup b\in{\bf con}\iff b\sqsubseteq x^*$ for all $b\in L_-$, \item or $x\in L_-$ and there is some $x^*\in L_+$ such that  $a\sqcup x\in{\bf con}\iff a\sqsubseteq x^*$ for all $a\in L_+$. \end{itemize} In this case $x^*$ is called a  d-pseudo-complement of $x$. \end{defn}

We say that a d-lattice  $(L;\dbt,\dbf; {\bf con}, {\bf tot})$ is  \emph{d-pseudo-complemented} if all elements of $L_+$ and $L_-$ are d-pseudo-complemented. In this case, for all $a\in L_+$ and $b\in L_-$  it holds that $$b\sqsubseteq a^*\iff a\sqcup b\in{\bf con}\iff a\sqsubseteq b^*.$$

For each distributive lattice $L$, the d-lattice $\omega(L)$ is d-pseudo-complemented if and only if $L$ is pseudo-complemented. Every d-frame  is d-pseudo-complemented \cite{MMW}. 
The  following proposition relates d-complement to d-pseudo-complement.

\begin{prop}\label{d-complement as d-pseudo-complement} An element $x$ of a d-lattice $(L;\dbt,\dbf;{\bf con},{\bf tot})$   is d-complemented if and only if $x$  is d-pseudo-complemented and $x\sqcup x^*\in {\bf tot}$.   In this case  $x^*=x^\dag$.   
\end{prop}

\begin{defn}\label{extremally disconnected d-frame} A d-frame $\mathcal{L}=(L;\dbt,\dbf;{\bf con},{\bf tot})$ is extremally disconnected if the d-pseudo-complement of each element  of $L_+\cup L_-$ is d-complemented. \end{defn}

For each frame $L$,   the d-frame $\omega(L)$ is extremally disconnected if and only if the frame $L$ is extremally disconnected  \cite[page 101]{Johnstone} in the sense that $\neg a\sqcup\neg\neg a=1$ for every $a$ in $L$.  

\begin{lem}Let $\mathcal{L}=(L;\dbt,\dbf;{\bf con},{\bf tot})$ be a d-frame. Then the following are equivalent: \begin{enumerate}[label=\rm(\arabic*)] \setlength{\itemsep}{0pt}
\item $\mathcal{L}$ is extremally disconnected.
\item For each element $x$ of $L_+$, $x^{**}\sqcup x^{*}\in{\bf tot}$; and likewise for elements of $L_-$.
\item For all $a\in L_{+}$ and $b\in L_{-}$,  $a\sqcup b\in{\bf con}\iff b^*\sqcup a^*\in{\bf tot}$. \end{enumerate} \end{lem}

\begin{proof}
$(1)\Rightarrow(2)$ Let $x\in L_{+}$. Since $\mathcal{L}$ is extremally disconnected,  $x^*$ is d-complemented.  By Proposition \ref{d-complement as d-pseudo-complement} the d-complement of $x^*$ is given by its d-pseudo-complement $x^{**}$, so $x^{**}\sqcup x^{*}\in{\bf tot}$. Likewise for elements of $L_-$.

$(2)\Rightarrow(3)$ Let $a\in L_{+}$ and $b\in L_{-}$. If $a\sqcup b\in{\bf con}$, then $b\sqsubseteq a^*$ by definition, hence $a^{**}\sqsubseteq b^*$. Since $a^{**}\sqcup a^{*}\in{\bf tot}$ by assumption, it follows that $b^*\sqcup a^*\in{\bf tot}$ because {\bf tot} is an upper set  under the information order. This shows that $a\sqcup b\in{\bf con}\implies b^*\sqcup a^*\in{\bf tot}$. The converse implication holds for all d-frames. If $b^*\sqcup a^*\in{\bf tot}$, then from $a\sqcup a^*\in{\bf con}$ and ({\bf con}--{\bf tot}) it follows that $a\sqsubseteq b^*$, hence $a\sqcup b\in{\bf con}$.  

$(3)\Rightarrow(1)$ We show that the d-pseudo-complement $a^*$ of each  $a\in L_{+}$ is d-complemented. Since $a\sqcup a^*\in{\bf con}$, then $a^{**}\sqcup a^*\in{\bf tot}$ by assumption, hence $a^*$ is d-complemented by Proposition \ref{d-complement as d-pseudo-complement}. Likewise, the d-pseudo-complement $b^*$ of each  $b\in L_{-}$ is d-complemented. Therefore, $\mathcal{L}$ is extremally disconnected.
\end{proof}

The following lemma shows that for a d-zero-dimensional d-frame, extremal disconnectedness of $\mathcal{L}$ is equivalent to completeness of its d-Boolean algebra coreflection. This extends the characterization (v) for frames on page 101 of Johnstone \cite{Johnstone} to the realm of d-frames.

\begin{lem} \label{complete dBA vs extremal discon} Let $\mathcal{L}=(L;\dbt,\dbf;{\bf con},{\bf tot})$ be  d-frame.   If  $\mathcal{L}$  is extremally disconnected, then the d-Boolean algebra $\dB\mathcal{L}$ is complete.
The converse implication also holds provided that   $\mathcal{L}$ is d-zero-dimensional.  
\end{lem}

\begin{proof} Suppose $\mathcal{L}$ is extremally disconnected. To see that the d-Boolean algebra $\dB\mathcal{L}$ (i.e., the d-Boolean coreflection of $\mathcal{L}$) is complete, it suffices to show that both $B_+$ and $B_-$ are complete lattices. Since taking d-pseudo-complement defines an anti-tone Galois connection between the complete lattices $L_+$ and $L_-$, the subset $\{a^*:a\in L_+\}$  is a complete lattice under the information order. By extremal disconnectedness of $\mathcal{L}$, $B_-$ is equal to the set $\{a^*:a\in L_+\}$, so $B_-$ is a complete lattice. Likewise, $B_+$ is a complete lattice. 

Now we show that if $\mathcal{L}$ is d-zero-dimensional and the d-Boolean algebra $\dB\mathcal{L}$ is complete, then $\mathcal{L}$ is extremally disconnected. Here we only check that the d-pseudo-complement  of each element of $L_+$ is d-complemented. The case for elements of $L_-$ is similar. 

Let $a\in L_+$. Since every element of $B_-$ is d-complemented, it suffices to show that the d-pseudo-complement $a^*$ of $a$ belongs to $B_-$. To this end, we show that  $$a^*={\sup}_{B_-} \{b\in B_{-}:b\sqsubseteq a^*\},$$ where the symbol ${\sup}_{B_-}$ denotes  join  in the complete lattice $B_-$. We proceed with two steps. 

{\bf Step 1}.   $a^*\sqsubseteq{\sup}_{B_-} \{b\in B_{-}:b\sqsubseteq a^*\}$. 

Since $\mathcal{L}$ is d-zero-dimensional, $a^*$ is a join (in $L$) of d-complemented elements (which are necessarily in $B_-$), then $a^*=\sup_L \{b\in B_{-}:b\sqsubseteq a^*\}$, hence $a^*\sqsubseteq{\sup}_{B_-} \{b\in B_{-}:b\sqsubseteq a^*\}.$   

{\bf Step 2}.   $a^*\sqsupseteq{\sup}_{B_-} \{b\in B_{-}:b\sqsubseteq a^*\}$. 

It suffices to show that  $a\sqcup \big({\sup}_{B_-}\{b\in B_-:b\sqsubseteq a^*\}\big)\in{\bf con}.$ 
For each $x\in B_{+}$ with $x\sqsubseteq a$ and each $b\in B_{-}$ with $b\sqsubseteq a^*$, since $a\sqcup a^*\in{\bf con}$, then $x\sqcup b\in{\bf con}$, hence $x\sqcup b\in{\bf con}_{\dB}$. Since  $\dB\mathcal{L}$ is a d-Boolean algebra, the consistency predicate ${\bf con}_{\dB}$ is Scott closed in $(\dB L,\sqsubseteq)$ by Corollary \ref{dBA via order reversing}.  Since $\{b\in B_-:b\sqsubseteq a^*\}$ is directed under the information order $\sqsubseteq$, then $$x\sqcup\big({\sup}_{B_-}\{b\in B_-:b\sqsubseteq a^*\}\big)\in {\bf con}_{\dB}\subseteq{\bf con}$$ for all $x\in B_{+}$ with $x\sqsubseteq a$.   

Since $\mathcal{L}$ is d-zero-dimensional,   $a$ is equal to the join of $\{x\in B_{+}:x\sqsubseteq a\}.$   Since the set $\{x\in B_+:x\sqsubseteq a\}$ is  directed  and ${\bf con}$ is a Scott closed set of $(L,\sqsubseteq)$,   it follows that $$a\sqcup\big({\sup}_{B_-}\{b\in B_-:b\sqsubseteq a^*\}\big)\in{\bf con}, $$ as desired.  \end{proof}

\begin{proof}[Proof of Proposition \ref{complete dBA vs dfrm}] 
An immediate consequence of Theorem \ref{dBA vs dfrm} and Lemma \ref{complete dBA vs extremal discon}. \end{proof}

\section{Spectra of d-lattices and d-Boolean algebras} \label{d-spectrum}

This section concerns bitopological representation of d-Boolean algebras and d-lattices. Theorem \ref{dBA vs dfrm} represents d-Boolean algebras by compact and d-zero-dimensional d-frames;   Corollary \ref{d-lattice vs d-frame} represents d-lattices by coherent d-frames. If every coherent d-frame is spatial (for definition see below), then by the duality of Jung and Moshier \cite{JM2006,JM2008} between spatial d-frames and d-sober bitopological spaces  we would obtain bitopological representations for d-Boolean algebras and d-lattices. So, the problem reduces to whether every coherent d-frame is spatial; or equivalently, whether the d-frame of ideals of every d-lattice is spatial.

For each d-lattice $\mathcal{L}$, we introduce a bitopological space $\dSpec\mathcal{L}$, called the spectrum of $\mathcal{L}$. This space is proved to coincide with the space of d-points of the d-frame $\idl\mathcal{L}$. So, spectra of d-lattices are helpful in understanding the structure of bitopological spaces of d-points of coherent d-frames.  
It is proved that the spectrum of each  d-Boolean algebra is a d-Boolean space and every d-Boolean space arises in this way, so, the category of d-Boolean algebras is dually equivalent to the category of d-Boolean spaces. But, in contrast to the well-known fact that the frame of ideals of a distributive lattice is always spatial, the d-frame of ideals of a d-lattice need not be spatial, see Example \ref{idl is not always d-spatial}. 
  
We recall the duality of Jung and Moshier between spatial d-frames and d-sober bitopological spaces first. Let $\mathcal{L}=(L;\dbt,\dbf; \mathbf{con}, \mathbf{tot})$ be  a d-frame.  A \emph{d-point} \cite[Definition 3.4]{JM2008} of  $\mathcal{L}$  is a d-frame homomorphism $p\colon\mathcal{L}\lra\bbB$. The set $\dpt \mathcal{L}$ of d-points of  $\mathcal{L}$ becomes a bitopological space by considering  as the topology $\tau_+$ the collection of $$\Phi_+(a) =\{p :p(a)= \dbt\},\quad a\in L_+;$$   and as the topology $\tau_-$ the collection of $$\Phi_-(b) =\{p:p(b)= \dbf\},\quad b\in L_-.$$    
The construction for objects is extended to a  functor 
$$\dpt\colon \dFrm^{\rm op}\lra \bf{BiTop}$$ in the usual way, see \cite{JM2006,JM2008}.

\begin{thm}{\rm(\cite[Theorem 3.5]{JM2008})} The functor 
$\dpt \colon \dFrm^{\rm op}\lra \bf{BiTop}$ is right adjoint to  $\dO\colon\bf{BiTop}\lra \dFrm^{\rm op}$. 
\end{thm}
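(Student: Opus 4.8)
The plan is to exhibit, naturally in the bitopological space $X=(X,\tau_+,\tau_-)$ and the d-frame $\mathcal{L}=(L;\dbt,\dbf;{\bf con},{\bf tot})$, a bijection
$${\bf BiTop}(X,\dpt\mathcal{L})\cong\dFrm(\mathcal{L},\dO X),$$
which is exactly the hom-set isomorphism witnessing $\dO\dashv\dpt$, since $\dFrm^{\rm op}(\dO X,\mathcal{L})=\dFrm(\mathcal{L},\dO X)$. The first thing I would record is that a d-frame homomorphism $h\colon\mathcal{L}\lra\dO X$ splits into a pair of frame homomorphisms $h_+\colon L_+\lra\tau_+$ and $h_-\colon L_-\lra\tau_-$: because $h$ preserves the complementary pair $\dbt=(1,0)$ and $\dbf=(0,1)$ and $\dO X$ has $\dbt=(X,\emptyset)$, $\dbf=(\emptyset,X)$, the homomorphism $h$ sends $L_+=\,\downarrow\!\dbt$ into $\tau_+\times\{\emptyset\}$ and $L_-=\,\downarrow\!\dbf$ into $\{\emptyset\}\times\tau_-$, so that $h(a\vee b)=(h_+(a),h_-(b))$ for $a\in L_+$, $b\in L_-$.

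Next I would define the two assignments. Given a continuous map $f\colon X\lra\dpt\mathcal{L}$, I set $h_+(a)=f^{-1}(\Phi_+(a))$ for $a\in L_+$ and $h_-(b)=f^{-1}(\Phi_-(b))$ for $b\in L_-$, and assemble them into $\Gamma(f)=h$; continuity of $f$ guarantees that $h_\pm$ land in the correct topologies. Conversely, given $h$ with components $h_+,h_-$, I define for each $x\in X$ the map $p_x\colon L\lra\bbB$ by
$$p_x(a\vee b)=\begin{cases}1 & x\in h_+(a),\ x\in h_-(b),\\ \dbt & x\in h_+(a),\ x\notin h_-(b),\\ \dbf & x\notin h_+(a),\ x\in h_-(b),\\ 0 & x\notin h_+(a),\ x\notin h_-(b),\end{cases}$$
for $a\in L_+$, $b\in L_-$, and set $\Lambda(h)(x)=p_x$.

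The verifications I would carry out are four. (i) Each $p_x$ is a d-point: it is a frame homomorphism because it is the product of the two frame homomorphisms ``$x\in h_\pm(-)$'' into $\bbB\cong\mathbf{2}\times\mathbf{2}$; it preserves ${\bf con}$ because $h({\bf con})\subseteq{\bf con}_{\dO X}$ forces $h_+(a)\cap h_-(b)=\emptyset$ whenever $(a,b)\in{\bf con}$, so $p_x(a\vee b)\neq 1$; and it preserves ${\bf tot}$ because $h_+(a)\cup h_-(b)=X$ whenever $(a,b)\in{\bf tot}$, so $p_x(a\vee b)\neq 0$. (ii) $\Lambda(h)$ is continuous, since $\Lambda(h)^{-1}(\Phi_+(a))=h_+(a)$ and $\Lambda(h)^{-1}(\Phi_-(b))=h_-(b)$. (iii) $\Gamma(f)$ is a genuine d-frame homomorphism: here $\Phi_+$ turns finite meets into intersections and arbitrary joins into unions (because each $f(x)$ preserves them and $p(a)\in\{0,\dbt\}$ for $a\in L_+$), while ${\bf con}$ and ${\bf tot}$ are preserved precisely because each $f(x)$ is a d-point. (iv) $\Gamma$ and $\Lambda$ are mutually inverse: $\Gamma\Lambda(h)$ recovers $h_+(a)=\{x:p_x(a)=\dbt\}$, and $\Lambda\Gamma(f)=f$ because a d-point $p$ is completely determined by the sets $\{a\in L_+:p(a)=\dbt\}$ and $\{b\in L_-:p(b)=\dbf\}$ together with $p(a\vee b)=p(a)\vee p(b)$.

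Finally I would check naturality of the bijection in $X$ and in $\mathcal{L}$, a routine diagram chase using that $\dO$ acts on a continuous map by preimage and $\dpt$ acts on a d-frame homomorphism by precomposition. The only place where genuinely bitopological (rather than merely topological) content enters is the bookkeeping of ${\bf con}$ and ${\bf tot}$ in steps (i) and (iii); this is the part I expect to demand the most care, though it is mechanical rather than deep, reducing in each direction to the equivalences ``$h$ preserves ${\bf con}$'' $\Leftrightarrow$ ``the $\tau_+$- and $\tau_-$-components are disjoint'' and ``$h$ preserves ${\bf tot}$'' $\Leftrightarrow$ ``the components cover $X$''.
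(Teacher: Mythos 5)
Your proposal is correct, but note that the paper does not prove this theorem at all: it is imported verbatim from Jung and Moshier (\cite[Theorem 3.5]{JM2008}), so there is no in-paper argument to compare against. What you have written is the standard dualizing-object argument, the bitopological analogue of the proof of $\CO\dashv\pt$ with $\bbB$ in the role of the Sierpi\'{n}ski space, and all the key verifications you list go through: the splitting $h\mapsto(h_+,h_-)$ is justified by $h(a)=h(a)\wedge h(\dbt)$ for $a\in L_+$, the predicate bookkeeping in (i) and (iii) is exactly right, and the bijection ${\bf BiTop}(X,\dpt\mathcal{L})\cong\dFrm(\mathcal{L},\dO X)$ is the correct hom-set form of ``$\dpt$ right adjoint to $\dO$''. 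One streamlining worth noting: your map $p_x=\Lambda(h)(x)$ is precisely the composite $[x]\circ h$, where $[x]\colon\dO X\lra\bbB$ is the d-point generated by $x$ (defined in the paper just after this theorem); since a composite of d-frame homomorphisms is a d-frame homomorphism, step (i) becomes immediate once one checks that $[x]$ itself is a d-point, which also makes the unit $x\mapsto[x]$ of the adjunction visible. Your proof as written is sound; only the naturality check, which you correctly describe as a routine diagram chase, is left implicit.
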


For later use we write out the unit and counit of the adjunction $\dO\dashv\dpt$. Suppose that $(X,\tau_+,\tau_-)$ is a bitopological space. For each $x\in X$, the map $$[x]\colon\tau_+\times\tau_-\lra\bbB, \quad [x](U,V) =\begin{cases}1 &x\in U\cap V, \\ \dbt &x\in U\setminus V,\\ \dbf & x\in V\setminus U,\\ 0 &x\notin U\cup V \end{cases}$$ is clearly a d-point of  $\dO(X,\tau_+,\tau_-)$. The component  $ (X,\tau_+,\tau_-)\lra \dpt\circ\dO(X,\tau_+,\tau_-)$  of unit of the adjunction $\dO\dashv\dpt$ at $(X,\tau_+,\tau_-)$ sends each $x$ of $X$ to the d-point $[x]$. To write out the counit, we identify the underlying frame $L$ of a d-frame $\mathcal{L}=(L;\dbt,\dbf; \mathbf{con}, \mathbf{tot})$ with the product frame $L_+\times L_-$. Then the  component of the counit at $\mathcal{L}$ sends each   $(a,b)$ of $L_+\times L_-$ to  $(\Phi_+(a),\Phi_-(b))$ of $\dO\circ\dpt\mathcal{L}$.
 
A bitopological space $(X,\tau_+,\tau_-)$ is said to be \emph{d-sober} if each d-point $p$ of  $\dO(X,\tau_+,\tau_-) $  is generated by a unique point $x$ of $X$ in the sense that $p=[x]$. As in the situation for topological spaces, a bitopological space $X$ is d-sober if and only if the component of the unit of the adjunction $\dO\dashv\dpt$ at $X$ is a bijection, hence a homeomorphism. It is known that every order-separated bitopological space is d-sober \cite[Theorem 4.13]{JM2006} and  every d-sober bitopological space is $T_0$.
A d-frame $\mathcal{L}$ is  \emph{spatial}  if it is the d-frame of open sets of some bitopological space; or equivalently, the component $\mathcal{L}\lra \dO(\dpt \mathcal{L})$ of the counit of the adjunction $\dO\dashv\dpt$ at $\mathcal{L}$ is an isomorphism.  The adjunction $\dO\dashv\dpt$ between $\bf{BiTop}$ and $\bf{dFrm}$ cuts down to a duality between  d-sober bitopological spaces and  spatial d-frames. See Jung and Moshier \cite{JM2006, JM2008} for details.

\begin{exmp} \label{omega(L) is spatial} For each frame $L$, the d-frame $\omega(L)$ is spatial if and only if $L$, as a frame, is spatial. In particular, for each distributive lattice $M$, the d-frame $\idl\omega(M)$ is spatial, because  $\idl M$ is a spatial frame and  $\idl\omega(M)$ is easily verified to be isomorphic to $\omega(\idl M)$. \end{exmp}

Before proceeding, some words on notations.  Suppose $\mathcal{L}=(L;\dbt,\dbf;{\bf con},{\bf tot})$ is a d-lattice and $I$ is an ideal of the distributive lattice $L$. Then $I_+=I\cap L_+$ is an ideal of the lattice $L_+$, $I_-=I\cap L_-$ is an ideal of the lattice $L_-$, and $I=\{a\sqcup b:a\in I_+,b\in I_-\}$. Conversely, for each ideal $I_+$ of $L_+$ and each ideal $I_-$ of $L_-$, the set $\{a\sqcup b:a\in I_+,b\in I_-\}$ is an ideal of $L$. These correspondences are clearly bijective,  so we often write an ideal of $L$ as a pair $(I_+,I_-)$. Written in this way, the consistency predicate and the totality predicate of  $\idl\mathcal{L}$ are given by $${\bf con}_{\idl}=\{(I_+,I_-): \forall a\in I_+,\forall b\in I_-, a\sqcup b \in {\bf con}\};$$
$${\bf tot}_{\idl}=\{(I_+,I_-): \exists a\in I_+,\exists b\in I_-, a\sqcup b \in {\bf tot}\}.$$

Let $L$ be a distributive lattice and $F\subseteq L$. It is readily seen that $F$ is a filter if  and only if  the characteristic map $L\lra\{0,1\}$ of $F$ preserves finite meets (including the empty one). 
This motivates the notion of d-filter of d-lattices.  We remind the reader that the Boolean algebra $\mathbb{B}=\{0,1,\dbt,\dbf\}$ is viewed as a d-lattice with ${\bf con}=\{0,\dbt,\dbf\}$ and ${\bf tot}=\{1,\dbt,\dbf\}$. 

\begin{defn} 
Let $\mathcal{L}=(L;\dbt,\dbf;{\bf con},{\bf tot})$ be a d-lattice. A d-filter of $\mathcal{L}$ is a map $f\colon L\lra \bbB$ subject to the following conditions: \begin{enumerate}[label=\rm(\alph*)] \setlength{\itemsep}{0pt}    
\item  $f(\dbt)\sqsupseteq  \dbt$, $f(\dbf)\sqsupseteq \dbf$.  \item $f$ preserves  {\bf tot}
and finite meets; that is, $f({\bf tot})\subseteq\{1,\dbt,\dbf\}$  and $f(x\sqcap y)=f(x)\sqcap f(y)$ for all $x,y\in L$.  
\end{enumerate}  
\end{defn}

A \emph{d-ideal} of a d-lattice  $\mathcal{L}=(L;\dbt,\dbf;{\bf con},{\bf tot})$ is defined to be a d-filter of the order-dual $\mathcal{L}^\partial$ (Example \ref{order dual}) of $\mathcal{L}$. Explicitly, a d-ideal of $\mathcal{L}$ is a map $h\colon L\lra \bbB$ such that \begin{enumerate}[label=\rm(\alph*)] \setlength{\itemsep}{0pt}  
\item $h(\dbf)\sqsupseteq \dbt$, $h(\dbt)\sqsupseteq  \dbf$; and 
\item $h$ maps  {\bf con} to {\bf tot} and transforms finite joins to meets; that means,  $h({\bf con})\subseteq\{1,\dbt,\dbf\}$ and $h(x\sqcup y)=h(x)\sqcap h(y)$ for all $x,y\in L$.  \end{enumerate}  

If $h\colon L\lra \bbB$ is a d-ideal of $\mathcal{L}$, the complement of $h$, which is defined to be the map $$\neg h\colon L\lra \bbB, \quad x\mapsto \neg (h(x)),$$ is called a \emph{co-d-ideal} of $\mathcal{L}$, where $\neg\colon\bbB\lra\bbB$ is the negation operator on the Boolean algebra $\bbB$ given by $\neg 0=1$, $\neg 1=0$, $\neg\dbt=\dbf$ and $\neg\dbf=\dbt$.
It is easily verified that a map $g\colon L\lra \bbB$ is a co-d-ideal of a d-lattice $(L;\dbt,\dbf;{\bf con},{\bf tot})$ if  and only if  it satisfies the following conditions: \begin{enumerate}[label=\rm(\alph*)] \setlength{\itemsep}{0pt}  
\item $g(\dbt)\sqsubseteq  \dbt$, $g(\dbf)\sqsubseteq \dbf$. 
\item $g$ preserves  {\bf con}  and finite joins; that is,  $g({\bf con})\subseteq\{0,\dbt,\dbf\}$  and $g(x\sqcup y)=g(x)\sqcup g(y)$ for all $x,y\in L$.  \end{enumerate}   
  
The d-ideals of a d-lattice correspond bijectively to its co-d-ideals by taking complement. So, in this paper we shall talk about structures of co-d-ideals instead of d-ideals.

\begin{prop} Let $f\colon L\lra\bbB$ be a d-filter of a d-lattice $\mathcal{L}=(L;\dbt,\dbf;{\bf con},{\bf tot})$. Then the following are equivalent: \begin{enumerate}[label=\rm(\arabic*)] \setlength{\itemsep}{0pt}  \item $f\colon \mathcal{L}\lra\bbB$ is a d-lattice homomorphism.
\item $f$ preserves  {\bf con}  and finite joins. 
\item $f$ is a co-d-ideal.  \end{enumerate}  \end{prop}

\begin{defn} 
Let $\mathcal{L}=(L;\dbt,\dbf;{\bf con},{\bf tot})$ be a d-lattice. A map $f\colon L\lra \bbB$ is said to be a prime d-filter  of $\mathcal{L}$ if it is at the same time a d-filter and a co-d-ideal of $\mathcal{L}$. 
In other words, a prime d-filter of  $\mathcal{L}$ is  a d-lattice homomorphism $f\colon\mathcal{L}\lra \bbB$. \end{defn}

A d-filter $f$ of a d-lattice is said to be \emph{proper} if $f(0)=0$. It is easily verified that a d-filter $f$ is proper if and only if it preserves $\dbt$ and $\dbf$; that is,   $f(\dbt)=\dbt$ and $f(\dbf)=\dbf$. Dually, a d-ideal $h$  is  \emph{proper} if $h(1)=0$. 
It is well-known that each maximal proper filter of a distributive lattice is a prime filter, but, even maximal proper d-filters of a d-Boolean algebra  need not be prime.

\begin{exmp}   Consider the d-Boolean algebra $([0,1]\times[0,1]; (1,0),(0,1);{\bf con},{\bf tot})$, where ${\bf con}=\{(a,b):a\leq 1-b\}$ and ${\bf tot}=\{(a,b):1-a\leq b\}$.   \begin{center}
\begin{tikzpicture}[scale=0.9] \fill[lightgray] (0,3) -- (3,3) -- (3,0) -- cycle;
\draw[thick] (0,0) -- (3,0) -- (0,3) -- cycle;
\draw[thick] (0,3) -- (3,3) -- (3,0) -- cycle;
\node at (2,2) {{\bf tot}}; \node at (1,1) {{\bf con}}; 
\node at (3.3,0) {$\dbt$}; \node at (-0.3,3) {$\dbf$};
\end{tikzpicture} \end{center} 
The  map $$ f\colon [0,1]\times [0,1]\lra \bbB, \quad f(a,b)=\begin{cases}0,& a=0,~b=0 \\ \dbt &a>0,~b=0 \\ \dbf &a=0,~b>0 \\ 1 & a>0,~b>0 \end{cases}$$  is a maximal proper d-filter. But, $f$ is not a prime d-filter since it does not preserve {\bf con}.  
\end{exmp} 

For each d-filter $f$ of a d-lattice $(L;\dbt,\dbf;{\bf con},{\bf tot})$, let   $$ 
F_+=\{a\in L_+:f(a\sqcup\dbf)=1\},\quad F_-=\{b\in L_-:f(\dbt\sqcup b)=1\}.$$ Then $F_+$ is a filter of $L_+$  and $F_-$ is a filter of $L_-$. Since $a\sqcup b= (a\sqcup\dbf)\sqcap(\dbt\sqcup b)$ for all $a\in L_+$ and $b\in L_-$,  it is readily seen that the pair $(F_+,F_-)$ of filters satisfies that $$ a\sqcup b\in{\bf tot}\implies \text{either $a\in F_+$ or $b\in F_-$} $$  for all $a\in L_+$ and $b\in L_-$. Conversely, we have the following 

\begin{prop} \label{structure of d-filter} Suppose $\mathcal{L}=(L;\dbt,\dbf;{\bf con},{\bf tot})$ is a d-lattice;   $J_+$ is a filter of the lattice $L_+$; and $J_-$ is a filter of the lattice $L_-$.  If for all $a\in L_+$ and $b\in L_-$ it holds that  $$ a\sqcup b\in{\bf tot}\implies \text{either $a\in J_+$ or $b\in J_-$},$$  then the map $f\colon L\lra\mathbb{B}$, given by  $$f(a\sqcup b)= \begin{cases}1 & a\in J_+,~b\in J_-,\\
\dbt & a\in J_+,~b\notin J_-,\\ 
\dbf & a\notin J_+,~b\in J_-,\\
0 &a\notin J_+,~b\notin J_- \end{cases}$$ for all $a\in L_+$ and $b\in L_-$, is the unique d-filter of $\mathcal{L}$ such that  $F_+=J_+$ and $F_-=J_-$. \end{prop}

\begin{proof} Straightforward verification. \end{proof}

\begin{cor}\label{proper d-filter} If $f\colon L\lra \bbB$ is a d-filter of a d-lattice $(L;\dbt,\dbf;{\bf con},{\bf tot})$,   then  $$f(a\sqcup b)=f(a)\sqcup f(b)$$ for all $a\in L_{+}$ and $b\in L_{-}$.    \end{cor}

\begin{proof} This follows from the fact that the d-filter determined by $F_+=\{a\in L_+:f(a\sqcup\dbf)=1\}$ and $F_-=\{b\in L_-:f(\dbt\sqcup b)=1\}$ as in Proposition \ref{structure of d-filter}  possesses that property.
 \end{proof}
 
For each co-d-ideal $g$ of a d-lattice $(L;\dbt,\dbf;{\bf con},{\bf tot})$, let $$ G_+=\{a\in L_+:g(a)=0\},\quad G_-=\{b\in L_-:g(b)=0\}.$$  Then  $G_+$ is an ideal of the lattice $L_+$ and $G_-$ is an ideal of the lattice $L_-$ such that  for all $a\in L_+$ and $b\in L_-$,  $$ a\sqcup b\in{\bf con}\implies \text{either $a\in G_+$ or $b\in G_-$}. $$ Parallel to Proposition \ref{structure of d-filter} we have: 

\begin{prop} \label{structure of d-ideal} Suppose $\mathcal{L}=(L;\dbt,\dbf;{\bf con},{\bf tot})$ is a d-lattice;  $I_+$ is an ideal of the lattice $L_+$; and $I_-$ is an ideal of the lattice $L_-$. If for all $a\in L_+$ and $b\in L_-$ it holds that  $$ a\sqcup b\in{\bf con}\implies \text{either $a\in I_+$ or $b\in I_-$}, $$ then the map $g\colon L\lra\mathbb{B}$, given by  $$g(a\sqcup b)  =\begin{cases} 1 & a\notin I_+,~b\notin I_-,\\
\dbt & a\notin I_+,~b\in I_-,\\ 
\dbf & a\in I_+,~b\notin I_-,\\
0 & a\in I_+,~b\in I_- \end{cases}$$  for all $a\in L_+$ and $b\in L_-$, is the unique co-d-ideal of $\mathcal{L}$ such that $G_+=I_+$ and $G_-=I_-$.  \end{prop}
  
\begin{prop} Let $\mathcal{L}=(L;\dbt,\dbf;{\bf con},{\bf tot})$ be a d-lattice. If $f$ is a d-filter  and $g$ is a co-d-ideal of $\mathcal{L}$ such that  $f\sqsubseteq g$, then there is a prime d-filter $h$ of $\mathcal{L}$ for which $f\sqsubseteq h\sqsubseteq g$. \end{prop}

\begin{proof} First of all, notice that $f\sqsubseteq g$ implies $f(\dbt)=\dbt=g(\dbt)$ and $f(\dbf)=\dbf=g(\dbf)$. Then, the ideal  $G_+=\{a\in L_+:g(a)=0\}$ is disjoint with the filter $ 
F_+=\{a\in L_+:f(a\sqcup\dbf)=1\}$; the ideal  $G_-=\{b\in L_-:g(b)=0\}$ is disjoint with the filter $F_-=\{b\in L_-:f(\dbt\sqcup b)=1\}.$ 

Pick an ideal $I_+$ of $L_+$ that contains $G_+$ and is maximal with respect to being disjoint with $F_+$; pick an ideal $I_-$ of  $L_-$ that contains $G_-$ and is maximal with respect to being disjoint with $F_-$. Maximality of $I_+$ implies that $I_+$ is a prime ideal of $L_+$, hence $K_+\coloneqq L_+\setminus I_+$ is a filter of $L_+$. Likewise, $K_-\coloneqq L_-\setminus I_-$ is a filter of $L_-$. 

Since $G_+\subseteq I_+$ and $G_-\subseteq I_-$,   for all $a\in L_+$ and $b\in L_-$ it holds that $$ a\sqcup b\in{\bf con}\implies \text{either $a\in I_+$ or $b\in I_-$}.$$ Since $F_+\subseteq K_+$ and $F_-\subseteq K_-$,   for all $a\in L_+$ and $b\in L_-$ it holds that  $$ a\sqcup b\in{\bf tot}\implies \text{either $a\in K_+$ or $b\in K_-$}.$$ Therefore, the pair $(I_+,I_-)$ of ideals determines a co-d-ideal, say $h$; the pair $(K_+,K_-)$ of filters determines a d-filter, say $k$. It is readily seen that $h=k$, so $h$ is a prime d-filter. That $f\sqsubseteq h\sqsubseteq g$ is clear by the construction of $h$.
\end{proof}
  
For d-Boolean algebras we can say more. Let $\mathcal{L}=(L;\dbt,\dbf;{\bf con},{\bf tot})$ be a d-Boolean algebra. Since $x\sqcup x^\dag\in{\bf con}\cap{\bf tot}$ for all $x\in L_+\cup L_-$, then for each filter $F_+$ of $L_+$ and each filter $F_-$ of $L_-$, the following are equivalent: 
\begin{itemize} \setlength{\itemsep}{0pt} 
\item the pair $(F_+,F_-)$ determines a  d-filter (as in Proposition \ref{structure of d-filter}); 
\item  $L_+=F_+\cup F_-^\dag$, where $F_-^\dag= \{x^\dag: x\in F_-\}$; 
\item $L_-=F_+^\dag\cup F_-$, where  $F_+^\dag= \{x^\dag: x\in F_+\}$. \end{itemize} 
Likewise, for each ideal $I_+$ of $L_+$ and each ideal $I_-$ of $L_-$, the following are equivalent: 
\begin{itemize} \setlength{\itemsep}{0pt} 
\item the pair $(I_+,I_-)$ determines a  co-d-ideal of $\mathcal{L}$  (as in Proposition \ref{structure of d-ideal}); 
\item    $L_+=I_+\cup I_-^\dag$, where $I_-^\dag= \{x^\dag: x\in I_-\}$; 
\item   $L_-=I_+^\dag\cup I_-$, where $I_+^\dag= \{x^\dag: x\in I_+\}$. \end{itemize} 

The following proposition implies, in particular, that prime d-filters of a d-Boolean algebra are pairwise incomparable.
\begin{prop}\label{dBA-IF} Let $\mathcal{L}=(L;\dbt,\dbf;{\bf con},{\bf tot})$ be a d-Boolean algebra.  If $f$ is a  d-filter and $g$ is a  co-d-ideal  of $\mathcal{L}$ such that $f\sqsubseteq g$, then $f=g$. \end{prop}

\begin{proof} Let $$G_+=\{a\in L_+:g(a)=0\}, \quad G_-=\{b\in L_-:g(b)=0\};$$     $$F_+=\{a\in L_+:f(a\sqcup\dbf)=1\},\quad F_-=\{b\in L_-:f(\dbt\sqcup b)=1\}.$$ The inequality $f\sqsubseteq g$ ensures that both $G_+$ and $G_-$ are proper ideals, both $F_+$ and $F_-$ are proper filters, and that $F_+\cap G_+=\emptyset$, $F_-\cap G_-=\emptyset$. 
Since $\mathcal{L}$ is a d-Boolean algebra, then $L_+= F_+\cup F_-^\dag$ and $L_- = G_-\cup G_+^\dag$. From $F_-\cap G_-=\emptyset$ and $L_- = G_-\cup G_+^\dag$ it follows that $F_-\subseteq L_-\setminus G_-\subseteq G_+^\dag$. From $F_+\cap G_+=\emptyset$ and $L_+= F_+\cup F_-^\dag$ it follows that $G_+\subseteq L_+\setminus F_+\subseteq F_-^\dag$, hence  $G_+^\dag\subseteq   F_-$ because $^\dag$ is an order-reversing isomorphism. Therefore, $F_-= G_+^\dag$ and consequently, $F_-= L_-\setminus G_-$ and $F_+= L_+\setminus G_+$.  Then,  by Proposition \ref{structure of d-ideal} and Proposition \ref{structure of d-filter} we obtain that $g=f$.
\end{proof}

The following lemma echoes the fact that if $F$ is a prime filter of a Boolean algebra $A$, then for each $a$ of $A$,   either $a$ or its  complement belongs to $F$, but not both. The proof will use the fact that in a d-Boolean algebra, $$a\sqcup b \in {\bf con}\iff b\sqsubseteq a^\dag \iff b^\dag\sqcup a^\dag\in  {\bf tot} $$ for all $a\in L_{+}$ and $b\in L_{-}$,  
which follows from that the consistency predicate and the totality predicate of a d-Boolean algebra are given by ${\bf con}=\{a\sqcup b: a\in L_+, b\in  L_-, a^\dag\sqsupseteq b \}$ and ${\bf tot}=\{a\sqcup b: a\in L_+, b\in  L_-,a^\dag\sqsubseteq b\}$.   
   
\begin{lem}\label{prime d-filter} 
A proper d-filter  $f\colon L\lra \bbB$ of a d-Boolean algebra $(L;\dbt,\dbf;{\bf con},{\bf tot})$  is prime if and only if it  satisfies:  \begin{enumerate}[label=\rm(\alph*)] \setlength{\itemsep}{0pt}  \item For all $a\in L_+$, $f(a)=\dbt \iff f(a^\dag)=0$; \item For all $b\in L_-$, $f(b)=\dbf \iff f(b^\dag)=0$.  \end{enumerate}   \end{lem}

\begin{proof} We prove the necessity first.   To see (a), let $a\in L_+$. Since $f$  preserves both {\bf con} and {\bf tot}, it follows that $f({\bf con}\cap{\bf tot})\subseteq\{\dbt,\dbf\}$, particularly $f(a\sqcup a^\dag)\in \{\dbt,\dbf\}$. Since $f(a)\in\{0,\dbt\}$, $f(a^\dag)\in\{0,\dbf\}$ and $f(a\sqcup a^\dag)= f(a)\sqcup f(a^\dag)$, it follows that $f(a)=\dbt\iff f(a^\dag)=0$. That $f$ satisfies (b) can be verified in a similar way.

For the  sufficiency, we only need to show that $f$ is a co-d-ideal.  Let $$F_+=\{a\in L_+:f(a\sqcup\dbf)=1\},\quad F_-=\{b\in L_-:f(\dbt\sqcup b)=1\}. $$ Then $F_+$ is a filter  of $L_+$,  $F_-$ is a filter of $L_-$, and $f$ is determined by the pair $(F_+,F_-)$ as in Proposition \ref{structure of d-filter}.

Since $f$ is a proper d-filter, it preserves $\dbt$ and $\dbf$. Then for each $a\in L_+$, it holds that $f(a\sqcup\dbf)=f(a)\sqcup f(\dbf)=f(a)\sqcup\dbf$ by Corollary \ref{proper d-filter},  which entails that $ f(a\sqcup\dbf)=1$  if and only if $f(a)=\dbt$. Then by the equivalence in (a) we obtain that $a\in F_+\iff f(a^\dag)=0 $ for all $a\in L_+$.
Since $(-)^\dag\colon L_+\lra L_-$ is an order-reversing isomorphism, it follows that $$G_-\coloneqq \{a^\dag:a\in F_+\}= \{b\in L_-: f(b)=0\}$$  is an ideal of $L_-$.  Likewise,  
$$G_+\coloneqq \{b^\dag:b\in F_-\}=\{a\in L_+:f(a)=0\} $$ is an ideal of $L_+$.   

We claim that the pair $(G_+,G_-)$ of ideals satisfies that  for all $a\in L_+$ and $b\in L_-$, $$ a\sqcup b\in{\bf con}\implies a\in G_+~\text{or}~b\in G_-.$$   
Suppose on the contrary that neither $a\in G_+$ nor $b\in G_-$. Then $f(a)=\dbt$ and $f(b)=\dbf$, hence $f(a^\dag)=0$ and $f(b^\dag)=0$ by assumption, and consequently $f(b^\dag\sqcup a^\dag)=0$ (Corollary \ref{proper d-filter}). This contradicts that $b^\dag\sqcup a^\dag\in{\bf tot}$ and that $f$ preserves {\bf tot}.   

Proposition \ref{structure of d-ideal} guarantees that  the pair $(G_+,G_-)$ of ideals determines a co-d-ideal $g$. It is readily verified that $g=f$, so $f$ is a co-d-ideal. \end{proof}
 
\begin{prop}Let $\mathcal{L}=(L;\dbt,\dbf;{\bf con},{\bf tot})$ be a d-Boolean algebra. \begin{enumerate}[label=\rm(\roman*)] \setlength{\itemsep}{0pt}
\item If $g$ is a prime d-filter of $\mathcal{L}$, then   $G_+=\{a\in L_+:g(a)=0\}$ is a prime ideal of $L_+$,   $G_-=\{b\in L_-:g(b)=0\}$ is a prime ideal of $L_-$,  and $G_+, G_-$ determine each other via $G_+^\dag=L_-\setminus G_-$. 
\item  If $I_+$ is a prime ideal of the lattice $L_+$, then there is a unique prime d-filter $g$ of  $\mathcal{L}$ such that $I_+=  \{a\in L_+: g(a)=0\}$. \end{enumerate} Therefore, the prime d-filters of $\mathcal{L}$ correspond bijectively to the prime ideals of the lattice $L_+$. \end{prop}

\begin{proof}(i)  
We only need to check that $G_+^\dag=L_-\setminus G_-$. For each $b\in L_-$, by Lemma \ref{prime d-filter}  either $b$ belongs to $G_-$ or $b^\dag$ belongs to $G_+$, but not both. It follows that $G_+^\dag=L_-\setminus G_-$.

(ii) Since $I_+$ is a prime ideal of $L_+$, then $L_+\setminus I_+$ is a prime filter of $L_+$, hence  $I_-\coloneqq \{a^\dag: a\in L_+\setminus I_+\}$ is a prime ideal of $L_-$. The co-d-ideal determined by the ideals $I_+$ and $I_-$ (as in Proposition \ref{structure of d-ideal}) is also a d-filter,  hence it is the unique prime d-filter  satisfying the requirement. \end{proof} 

Now we introduce the spectra of d-lattices.
For each d-lattice $\mathcal{L}=(L;\dbt,\dbf;{\bf con},{\bf tot})$, let $$\dSpec\mathcal{L} $$ be the set of all prime d-filters of $\mathcal{L}$. Make $\dSpec\mathcal{L} $ into a bitopological space by considering as  $\tau_+$ the topology generated by the collection of $$\phi_+(a)=\{g\in\dSpec\mathcal{L}:g(a)=\dbt\}, \quad a\in L_+; $$ and as  $\tau_-$ the topology generated by the collection of $$\phi_-(b)=\{g\in\dSpec\mathcal{L}:g(b)=\dbf\}, \quad b\in L_-. $$
The bitopological space $$(\dSpec\mathcal{L},\tau_+,\tau_-)$$ is called the \emph{spectrum} of the d-lattice $\mathcal{L}$. It is not hard to check that  each open set  of $\tau_+$ is of the form $$\phi_+(I_+)\coloneqq  
\{g\in\dSpec\mathcal{L}: \exists a\in I_+,\thinspace g(a)=\dbt\}$$ for some ideal $I_+$ of the lattice $L_+$;  likewise for $\tau_-$. In particular,  for all $a\in L_+$ and $b\in L_-$, $\phi_+(a)$ is a compact open set of $(\dSpec\mathcal{L},\tau_+)$, $\phi_-(b)$ is a compact open set of $(\dSpec\mathcal{L},\tau_-)$. 

Assigning to  each d-lattice its spectrum defines a contravariant functor $$\dSpec\colon{\bf dLat}^{\rm op}\lra{\bf BiTop}.$$

\begin{exmp}  The spectrum $\dSpec\bbB$ of the d-lattice $\bbB$   is a singleton bitopological space. \end{exmp}

\begin{prop}\label{spectrum as composite} The spectrum of a d-lattice $\mathcal{L}=(L;\dbt,\dbf;{\bf con},{\bf tot})$ is precisely the bitopological space of d-points of the d-frame of ideals of  $\mathcal{L}$; that is,  $\dSpec\mathcal{L} = \dpt\circ\idl\mathcal{L}$. \end{prop}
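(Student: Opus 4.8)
The plan is to read off the underlying sets from the adjunction of Proposition \ref{idl left adjoint to forgetful} and then match the two pairs of topologies directly. Recall that a prime d-ideal of $\mathcal{L}$ is precisely a d-lattice homomorphism $\mathcal{L}\lra\bbB$, while a d-point of the d-frame $\idl\mathcal{L}$ is precisely a d-frame homomorphism $\idl\mathcal{L}\lra\bbB$. Since $\bbB$ has a finite underlying lattice, it is a frame, hence a d-frame. Applying the adjunction $\idl\dashv U$ of Proposition \ref{idl left adjoint to forgetful} with codomain $\bbB$ therefore yields a bijection between $\dpt(\idl\mathcal{L})$ and $\dSpec\mathcal{L}$: a d-point $p\colon\idl\mathcal{L}\lra\bbB$ is sent to the prime d-ideal $p\circ\eta$, that is $a\mapsto p(\downarrow\!a)$, and the inverse sends a prime d-ideal $g$ to the d-point $\overline{g}$ with $\overline{g}(I)=\bv_{a\in I}g(a)$ as in the proof of Proposition \ref{idl left adjoint to forgetful}. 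Call this bijection $\Theta$.

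Next I would verify that $\Theta$ is a homeomorphism for both component topologies. Writing $L=L_+\times L_-$, the ideals of $L$ lying below $\downarrow\!\dbt$ are exactly those of the form $I_+\times\{0\}$ with $I_+$ an ideal of $L_+$, so these are the sets indexing the generating family $\Phi_+$ of $\dpt(\idl\mathcal{L})$. Fix such an $I=I_+\times\{0\}$ and a d-point $p$ with $g=\Theta(p)=p\circ\eta$. Since $I=\bv_{a\in I}\downarrow\!a$ in the frame $\idl L$ and $p$ preserves arbitrary joins, $p(I)=\bv_{a\in I_+}g(a)$; as $a\leq\dbt$ forces $g(a)\in\{0,\dbt\}$, this join equals $\dbt$ precisely when $g(a)=\dbt$ for some $a\in I_+$. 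Hence $\Theta$ carries $\Phi_+(I)$ onto $\phi_+(I_+)=\{g:\exists a\in I_+,\ g(a)=\dbt\}$. As $I_+$ ranges over the ideals of $L_+$, these sets are exactly the $\tau_+$-open sets on each side, so $\Theta$ is a $\tau_+$-homeomorphism. A symmetric computation using $\downarrow\!\dbf$, the ideals $\{0\}\times J_-$, and the values $g(b)\in\{0,\dbf\}$ for $b\in L_-$ shows that $\Theta$ carries $\Phi_-(\{0\}\times J_-)$ onto $\phi_-(J_-)$, so $\Theta$ is also a $\tau_-$-homeomorphism. Thus $\Theta$ is an isomorphism of bitopological spaces.

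The only delicate points are bookkeeping rather than substance: correctly identifying the positive and negative parts $(\idl\mathcal{L})_\pm$ as the ideals lying below $\downarrow\!\dbt$ and $\downarrow\!\dbf$, and tracking that the adjunction bijection is exactly the map $p\mapsto p\circ\eta$, whose defining join formula converts $p$-values into $g$-values. Finally, since $\Theta$ is the component of a natural isomorphism (being induced by the hom-set adjunction of Proposition \ref{idl left adjoint to forgetful}), the identification is natural in $\mathcal{L}$, giving the equality of functors $\dSpec$ and $\dpt\circ\idl$.
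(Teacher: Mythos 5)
Your proof is correct and follows essentially the same route as the paper: the adjunction $\idl\dashv U$ of Proposition \ref{idl left adjoint to forgetful} identifies d-points of $\idl\mathcal{L}$ with d-lattice homomorphisms $\mathcal{L}\lra\bbB$ (i.e.\ prime d-ideals), after which one matches the two bitopologies. The paper dismisses the topological comparison as routine, whereas you carry it out explicitly via the identification of $(\idl L)_\pm$ with ideals of $L_\pm$ and the join computation $p(I)=\bv_{a\in I_+}g(a)$; this is exactly the intended verification.
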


\begin{proof} By Proposition \ref{idl left adjoint to forgetful}, the functor $\idl\colon\dLat \longrightarrow \dFrm$ is left adjoint to the forgetful functor $U\colon \dFrm\longrightarrow\dLat$, it follows that a d-point of $\idl\mathcal{L}$ is precisely a d-lattice homomorphism $\mathcal{L}\lra\bbB$, hence a prime d-filter of $\mathcal{L}$.  It is routine to check that the bitopological structure of $\dSpec\mathcal{L}$ coincides with that of $\dpt\circ\idl\mathcal{L}$,   therefore $\dSpec\mathcal{L} =\dpt\circ\idl\mathcal{L}$. \end{proof}

For each d-lattice $\mathcal{L}=(L;\dbt,\dbf;{\bf con},{\bf tot})$, since the spectrum $(\dSpec\mathcal{L},\tau_+,\tau_-)$ coincides with the bitopological space of d-points of  $\idl\mathcal{L}$,  the map  $$\varphi\colon \idl L\lra \tau_+\times \tau_-, \quad (I_+,I_-) \mapsto (\phi_+(I_+), \phi_-(I_-)) $$  is a surjective d-frame homomorphism   $\idl\mathcal{L}\lra \dO\circ\dSpec\mathcal{L}$, it is indeed the component at $\idl\mathcal{L}$ of the counit of the adjunction  $\dO\dashv\dpt$. The d-frame $\idl\mathcal{L}$ is spatial if and only if $\varphi$ is an isomorphism of d-frames. But, $\varphi$ is not always an isomorphism of d-frames; that means, the d-frame  of ideals of a d-lattice need not be spatial. 

\begin{exmp}\label{idl is not always d-spatial} There is a d-lattice $\mathcal{L}$ for which the d-frame $\idl\mathcal{L}$ is not spatial. The example is related to the d-lattice $\omega(\bbB)$, so we temporarily write  $\top$ and $\bot$ for the complementary pair of the Boolean algebra $\bbB$, and reserve the symbols $\dbt$ and $\dbf$ for the d-lattice to be constructed.
Consider the structure $\mathcal{L}=(L;\dbt,\dbf;{\bf con},{\bf tot})$, where \begin{itemize} \setlength{\itemsep}{0pt}
\item   $L=\mathbb{B}\times\mathbb{B}$;   \item $\dbt=(1,0)$, $\dbf=(0,1)$; \item ${\bf con}=\{(a,b)\in \mathbb{B}\times \mathbb{B}: a\sqcap b=0\}$;  \item ${\bf tot}=\{(a,b)\in\mathbb{B}\times\mathbb{B}:a=1 ~ \text{or} ~ b=1\}\cup\{(\top,\bot) \}$. \end{itemize} It is readily verified that $\mathcal{L}$ is a d-lattice. The difference between the d-lattice $\mathcal{L}$ and the d-Boolean algebra $\omega(\bbB)$ is that the element $(\bot,\top)$ of $\bbB\times\bbB$ belongs to the totality predicate of  $\omega(\bbB)$, but not to that of $\mathcal{L}$. We show in three steps that the d-frame $\idl\mathcal{L}$ is not spatial. 

{\bf Step 1}.   $\dSpec\mathcal{L}=\dSpec \omega(\mathbb{B})$.  Both $\mathcal{L}$ and $\omega(\mathbb{B})$ have 2 prime d-filters, which are determined as in Proposition \ref{structure of d-filter} by the pair $(\uparrow\!\bot, \uparrow\!\bot)$ and  the pair $(\uparrow\!\top, \uparrow\!\top)$ of filters of $\bbB$.  Then one readily verifies that the bitopological spaces $\dSpec\mathcal{L}$ and  $\dSpec\omega(\mathbb{B})$ are equal to each other. 
 
{\bf Step 2}. Since the pair $(\downarrow\!\bot, \downarrow\!\top)$ belongs to the totality predicate of $\idl\omega(\mathbb{B})$, but not to that of $\idl\mathcal{L}$, the totality predicate of $\idl\mathcal{L}$ is a proper subset of the totality predicate of  $\idl\omega(\mathbb{B})$  which is  a finite set, so the d-frames $\idl\omega(\mathbb{B})$ and $\idl\mathcal{L}$ are not isomorphic.   

{\bf Step 3}. 
The d-frame $\idl\omega(\mathbb{B})$ is spatial by Example \ref{omega(L) is spatial} , so it is the d-frame of open sets of the bitopological space $\dSpec\omega(\mathbb{B})$. Since $\dSpec\mathcal{L} = \dpt\circ \idl\mathcal{L}$ by Proposition \ref{spectrum as composite}, it follows that  $\dO\circ\dpt(\idl\mathcal{L})  = \dO \dSpec\mathcal{L} = \dO \dSpec\omega(\bbB) =  \idl\omega(\mathbb{B}),$    therefore the d-frame $\idl\mathcal{L}$  is not spatial, otherwise it would be isomorphic to $\idl\omega(\mathbb{B})$. \end{exmp}   
   
However, the d-frame  of ideals of each d-Boolean algebra is always spatial.
\begin{prop}\label{compact and zero-dimensional d-frame is spatial} 
If $\mathcal{L}$ is a d-Boolean algebra, then  $\varphi\colon\idl\mathcal{L}\lra \dO\circ\dSpec\mathcal{L}$ is an isomorphism of d-frames. In particular, the d-frame $\idl\mathcal{L}$ is spatial. 
\end{prop}

\begin{proof} If  $\mathcal{L}$ is a d-Boolean algebra, then the d-frame $\idl\mathcal{L}$ is compact and d-zero-dimensional by Theorem \ref{dBA vs dfrm},
hence compact and regular in the sense of  \cite[Section 6]{JM2006} and consequently,   spatial by  \cite[Theorem 6.11]{JM2006}. By Proposition \ref{spectrum as composite} we have $\dSpec\mathcal{L} = \dpt\circ\idl\mathcal{L}$, it follows that $\idl\mathcal{L}$ is the d-frame of open sets of the bitopological space $\dSpec\mathcal{L}$,  so the map $\varphi$ is an isomorphism of d-frames.  \end{proof}  
 
\begin{cor} \label{duality between zk dframes and zk spaces}
The duality between spatial d-frames and d-sober bitopological spaces cuts down to a duality between compact and d-zero-dimensional d-frames and d-Boolean spaces.
\end{cor}

\begin{proof} On the one hand, every d-Boolean space is order-separated, hence d-sober by \cite[Theorem 3.9]{JM2008}. On the other hand, Proposition \ref{other side of the duality} shows that every compact and d-zero-dimensional d-frame is the d-frame $\idl\mathcal{L}$ of ideals of some d-Boolean algebra $\mathcal{L}$, hence  spatial by  Proposition \ref{compact and zero-dimensional d-frame is spatial}.  Thus, the conclusion follows from the fact that a bitopological space $(X,\tau_+,\tau_-)$ is compact and zero-dimensional if and only if   the d-frame  $\dO(X, \tau_+, \tau_-)$ is compact and d-zero-dimensional, as observed in \cite[Section 2.3]{Jakl}. \end{proof}
 
\begin{thm} \label{duality of Stone bitop}
The category of d-Boolean algebras is dually equivalent to the category of d-Boolean spaces.
\end{thm}

\begin{proof} Theorem \ref{dBA vs dfrm} shows that there is an equivalence  between the category of d-Boolean algebras and the category of compact and d-zero-dimensional d-frames. Corollary  \ref{duality between zk dframes and zk spaces} says that there is a dual equivalence  between the category of compact and d-zero-dimensional d-frames and the category of d-Boolean spaces. The composite of these equivalences gives the desired duality  $$\bfig
\morphism(0,0)|a|/@{->}@<3pt>/<750,0>[{\bf dBA}^{\rm op}`{\bf dBoolSp};\dSpec ]
\morphism(0,0)|b|/@{<-}@<-3pt>/<750,0>[{\bf dBA}^{\rm op}`{\bf dBoolSp}; \dClop] \efig$$ between d-Boolean algebras and d-Boolean spaces. \end{proof}
 
\begin{rem}\label{Stone-type} The Boolean algebra $\bbB$ is a \emph{schizophrenic object} \cite[Chapter VI]{Johnstone} (also called a \emph{dualizing object} \cite[page 14]{JM2006})  for the duality in Theorem \ref{duality of Stone bitop}. So, the duality is an example of \emph{natural dualities}, a topic pioneered by Priestley, see e.g. \cite{CD1998}.

On the one hand, since a prime d-filter of a d-Boolean algebra is a homomorphism to $\bbB$, the functor $\dSpec$ sends each d-Boolean algebra to the space of homomorphisms  to $\bbB$. 

On the other hand, make the Boolean algebra $\bbB$ into a bitopological space by considering as $\tau_+$ the topology generated by $\{\{\dbt,1\}, \{0,\dbt\}\}$, and as $\tau_-$ the topology generated by $\{\{\dbf,1\},\{0,\dbf\}\}$. Then for each bitopological space $(X,\tau_+,\tau_-)$ and each continuous map $f\colon X\lra \bbB$, the set $$U\coloneqq\{x\in X:f(x)\sqsupseteq\dbt\}$$ is $\tau_+$-open and $\tau_-$-closed; the set  $$V\coloneqq\{x\in X:f(x) \sqsupseteq\dbf\}$$ is $\tau_-$-open and $\tau_+$-closed; and the pair $(U,V)$ is an element of the d-Boolean algebra of d-clopen sets of $(X,\tau_+,\tau_-)$.   Furthermore, if $U$ is $\tau_+$-open and $\tau_-$-closed, and if $V$ is  $\tau_-$-open and $\tau_+$-closed, then  $$f_{U,V}\colon X\lra\bbB,\quad f_{U,V}(x)=\begin{cases}1 &x\in U\cap V, \\ \dbt &x\in U\setminus V,\\ \dbf & x\in V\setminus U,\\ 0 &x\notin U\cup V\end{cases}$$ is a continuous map. The correspondence $(U,V)\mapsto f_{U,V}$ is a bijection. So, the functor $\dClop$ sends each bitopological space to the algebra of continuous maps to $\bbB$. \end{rem} 
 
It is known that the spectrum of  a distributive lattice $L$ is Hausdorff if and only if $L$ is a Boolean algebra, see e.g. \cite[page 71]{Johnstone}. By Theorem \ref{duality of Stone bitop}, the spectrum $\dSpec\mathcal{L}$ of each d-Boolean algebra is a d-Boolean space,   hence order-separated. But the converse is not true. The d-lattice $\mathcal{L}$ in Example \ref{idl is not always d-spatial} is not a d-Boolean algebra, but its spectrum, which coincides with that of the d-Boolean algebra $\omega(\bbB)$, is   order-separated.  However, there is a partial converse.

\begin{prop}\label{d-Boolean and order-separated}
Let $\mathcal{L}=(L;\dbt,\dbf;{\bf con},{\bf tot})$ be a d-lattice. If the d-frame $\idl \mathcal{L}$ is  spatial and the bitopological space $\dSpec\mathcal{L}$ is order-separated, then $\mathcal{L}$ is a d-Boolean algebra.
\end{prop}

We need a lemma,  contained in Jakl \cite[Lemma 4.1.29]{Jakl},  which echoes the fact that every compact subset of a Hausdorff space is closed. A subset of a topological space $(X,\tau)$ is \emph{saturated} \cite{Gierz2003} if it is an upper set with respect to the specialization order. 

\begin{lem}\label{compact opens and dclopens} {\rm (\cite{Jakl})} Let $(X,\tau_+,\tau_-)$ be an order-separated bitopological space. \begin{enumerate}[label=\rm(\roman*)] \setlength{\itemsep}{0pt} \item Every compact saturated set of $(X,\tau_+)$ is a closed set of $(X,\tau_-)$. In particular, every compact open set of $(X,\tau_+)$ is a closed set of $(X,\tau_-)$. 
\item  Every compact saturated set of $(X,\tau_-)$  $H$ is a closed set of $(X,\tau_+)$. In particular, every compact open set of $(X,\tau_-)$ is a closed set of $(X,\tau_+)$. \end{enumerate} \end{lem}  
 
\begin{proof}[Proof of Proposition \ref{d-Boolean and order-separated}] Let $a\in L_+$. Since $(\dSpec\mathcal{L},\tau_+,\tau_-)$ is order-separated, $\phi_+(a)$ is a compact open set of $(\dSpec\mathcal{L},\tau_+)$, it follows from Lemma \ref{compact opens and dclopens} that $\phi_+(a)$ is a closed set of $(\dSpec\mathcal{L},\tau_-)$, so $\phi_+(a)$ is d-complemented in the d-frame  of open sets of $(\dSpec\mathcal{L},\tau_+,\tau_-)$. Since the d-frame $\idl\mathcal{L}$  is spatial by assumption, then   $$\varphi\colon \idl L\lra \tau_+\times \tau_-, \quad (I_+,I_-)\mapsto (\phi_+(I_+), \phi_-(I_-)) $$  is   an isomorphism of d-frames $\idl\mathcal{L}\lra \dO\circ\dSpec\mathcal{L}$. It follows that the  ideal $\downarrow\!a$ of $L_+$, which corresponds to the $\tau_+$-open and $\tau_-$-closed set $\phi_+(a)$, is d-complemented in the d-frame $\idl\mathcal{L}$, then $a$ is d-complemented by Lemma \ref{d-complemented ideal}. Likewise, each $b$ of $L_-$ is d-complemented. Therefore, $\mathcal{L}$ is a d-Boolean algebra.
\end{proof}
 
We end this section with a duality between complete d-Boolean algebras and extremally disconnected d-Boolean spaces.

\begin{defn} (\cite[Definition 2.2]{Datta}) A bitopological space $(X,\tau_+,\tau_-)$ is  extremally disconnected provided that the $\tau_-$-closure of each $\tau_+$-open set is   $\tau_+$-open, and that the $\tau_+$-closure of each $\tau_-$-open set is $\tau_-$-open. \end{defn}

Let $(X,\tau_+,\tau_-)$ be a bitopological space and $U\in\tau_+$. It is not hard to see that, in the d-frame $\dO(X,\tau_+,\tau_-)$,  the d-pseudo-complement of $U$ is given by the complement of the closure of $U$ in $(X,\tau_-)$; that is, $U^*=X\setminus \cl_-U$. By help of this fact one sees that $(X,\tau_+,\tau_-)$ is extremally disconnected if and only if the d-frame $\dO(X,\tau_+,\tau_-)$ is extremally disconnected. Then, it follows from Lemma \ref{complete dBA vs extremal discon} that a zero-dimensional bitopological space is extremally disconnected if and only if its d-Boolean algebra of  d-clopen sets is complete.   Combining this  with Theorem \ref{duality of Stone bitop} yields: 
\begin{prop}
The category of complete d-Boolean algebras and d-lattice homomorphisms is dually equivalent to the category of extremally disconnected d-Boolean spaces and continuous maps.
\end{prop}

 \section*{Acknowledgement}

The authors gratefully thank the referee for her/his  thorough analysis of the paper and insightful comments and suggestions which lead to a significant improvement of the paper.

\end{document}